\newtheorem{mythm}{Theorem}
\newtheorem{mydef}{Definition}
\newtheorem{assumption}{Assumption}
\newtheorem{lemma}{Lemma}
\newtheorem{remark}{Remark}
	\theoremstyle{nonumberplain}
	\newtheorem{proof}{Proof.}
\begin{document}
%
\title{Efficient algorithm for approximating Nash equilibrium
	of distributed aggregative games}
%
%
%

\author{Gehui Xu, Guanpu Chen, Hongsheng Qi, and Yiguang Hong, \IEEEmembership{Fellow, IEEE}
	\thanks{This work was supported by the National Natural Science Foundation of China (No. 61733018, No. 61873262), and by Shanghai Municipal Science and Technology Major Project (No. 2021SHZDZX0100). Corresponding author: Yiguang Hong.}
\thanks{Gehui Xu, Guanpu Chen, and Hongsheng Qi are with Key Laboratory of Systems and Control, Academy of Mathematics and Systems Science, Beijing, China,
	and are also with School of Mathematical Sciences, University of Chinese Academy of Sciences, Beijing, China. (e-mail: xghapple@amss.ac.cn, chengp@amss.ac.cn, qihongsh@amss.ac.cn).}
\thanks{Yiguang Hong is with
	Department of Control Science and Engineering, Shanghai Research Institute for Intelligent Autonomous Systems, Tongji University, Shanghai, and is also with the Key Laboratory of Systems and Control, Academy of Mathematics and Systems Science, Chinese Academy of Sciences, Beijing, China. (e-mail: yghong@iss.ac.cn).}}

\maketitle

\begin{abstract}
	In this paper, we aim to design  a distributed approximate  algorithm for seeking Nash equilibria of an aggregative game. 
	Due to the local set constraints of each player, projection-based
	algorithms have been widely employed for solving such
	problems actually.
	Since it may be quite hard to get the exact projection in practice, we utilize  inscribed polyhedrons to approximate  local set constraints, which yields a related approximate game model.  We first prove that the   Nash equilibrium of the approximate game is the $ \epsilon $-Nash equilibrium of the original game, and then propose a distributed algorithm to seek the $ \epsilon $-Nash equilibrium, where the projection  is then of  a standard form in  quadratic programming. With the help of the existing developed methods for  solving quadratic programming, we show the convergence of the proposed algorithm, and also discuss the computational cost issue related to the approximation. Furthermore, based on the exponential convergence of the algorithm, we estimate the approximation accuracy related to $ \epsilon $.  Additionally, we investigate the computational cost saved by approximation on numerical examples.
\end{abstract}

\begin{IEEEkeywords}
	$ \epsilon $-Nash equilibrium;  Approximation;  Distributed  algorithm; Aggregative game.
\end{IEEEkeywords}

%
\IEEEpeerreviewmaketitle

\section{Introduction}
%
%
%
%
Seeking  Nash equilibria (NE)  in non-cooperative games has been widely investigated in  social sciences and engineering.
As one of the important non-cooperative games, the aggregative game has drawn much growing interest in many fields, such as  
demand response management \cite{ye2016game} and multi-product enterprise oligopoly \cite{nocke2018multiproduct}. Particularly, because of complex topologies, or communication burdens, or privacy issues in large-scale networks, it is of great practical significance to seek NE  in a distributed manner, where players achieve the NE with local data and communications through networks \cite{gharesifard2015price,parise2019distributed,de2019distributed,yi2019asynchronous}.

Since  players' actions are usually constrained by  local sets, projection-based distributed  algorithms for NE or {generalized Nash equilibria (GNE)} seeking have been developed.
For aggregative games, \cite{koshal2016distributed} studied  projected distributed synchronous and asynchronous algorithms for  NE computation over a network, while
\cite{paccagnan2016distributed} investigated a projection-based distributed asymmetric  algorithm  for GNE seeking with affine coupling constraints. Then
\cite{liang2017distributed} designed a projected   distributed continuous-time algorithm for   non-smooth tracking dynamics with coupled constraints. 
Moreover,  \cite{lei2020distributed} proposed a projected    distributed  algorithm  for NE seeking based on iterative Tikhonov regularization methods, while \cite{belgioioso2020distributed} discussed  another projection-based   algorithm on a time-varying communication network for  seeking GNE with partial-decision information.

Various methods are usually adopted for projection operation, such as the  sequential quadratic program (SQP)  \cite{boggs1995sequential}, the interior point method (IPM)  \cite{nesterov1994interior}, and the augmented Lagrangian method (ALM)  \cite{fortin2000augmented}. However,  the computational complexity may be exceptionally high for high-dimensional constraint sets, and the computational error may increase with the expansion of data and model scales. 
On the other hand, hyperplane approximation  was  widely employed in various practical situations such as multi-objective optimization problems \cite{li2019hyperplane}, object tracking in video images \cite{jurie2002hyperplane}, and feature categorization of machine learning \cite{vincent2001k}.  With this inspiring idea, the players' feasible sets are approximated by constructing inscribed polyhedrons, which are thereby enclosed by a series of hyperplanes.   Therefore, it is easier to obtain the projection on the hyperplanes than that on the boundaries of convex sets, because a general projection operation is converted  into a  standard quadratic program, and many developed methods for quadratic programming can be effectively adopted. Although the computational complexity can be   reduced effectively in this way,  the approximate  process inevitably brings the loss of accuracy, related to the discussion of $ \epsilon $-NE. However, considering the applications in distributed computing with large-scale models, it makes  sense to sacrifice a little accuracy  for time saving and complexity reduction.

The motivation of this paper is to explore efficient NE seeking of a distributed aggregative game, where we promote to use inscribed polyhedrons to approximate players' feasible sets.


The main contributions of this paper are listed in the following.
\begin{itemize}
	\item We consider a distributed approximate NE seeking algorithm for aggregative games. Different from those in \cite{belgioioso2020distributed,yi2019asynchronous,lei2020distributed}, we approximate players' local feasible sets with inscribed polyhedrons, which converts the general projection operation  into a standard quadratic program. With the approximation, we study the seeking of an $ \epsilon $-NE of the original game.
	
	\item We discuss the approximation procedure and analyze the approximation.  To be specific, we provide an approximate method for constructing inscribed polyhedrons and discuss the computational cost saved by approximation.  Then we prove that the  NE of the approximate game is the $ \epsilon $-NE of the original game and analyze the factors influencing the accuracy of $ \epsilon $.
	
	\item We  show that the proposed algorithm converges to the $\epsilon$-NE with an exponential rate, and then give  an upper bound of the  value $ \epsilon $. Moreover, we discuss relationships between the computational cost and approximation from different viewpoints.
\end{itemize}

The remainder is organized as follows: Section \ref{s22} provides notations and preliminary knowledge as well as our problem formulation, while Section \ref{s3} discusses the approximation of
players' local feasible sets with inscribed polyhedrons,  and shows a relationship between  the equilibria of the approximate game and the original one. Then Section \ref{s4} obtains the convergence of a distributed approximate algorithm  to seek the NE with treating the projection as a standard quadratic program and  gives an upper bound of the value $\epsilon$, and Section \ref{s6} shows  numerical examples for illustration  of the proposed algorithm. Finally, Section \ref{s7} concludes the paper.

	\section{Aggregative game model}\label{s22}
In this section,  we first give some basic notations  and  preliminary knowledge, and then  formulate our problem.
\subsection{Notations and preliminaries}

Denote $ \mathbb{R}^{n} $(or $ \mathbb{R}^{m\times n} $) as the set of $ n $-dimensional (or $ m $-by-$ n $) real column vectors (or real matrices), and $ I_{n} $ as the $ n\times n $ identity matrix. Let $ \boldsymbol{1}_{n} $(or $ \boldsymbol{0}_{n} $) be the $ n $-dimensional column vector with all elements of $ {1} $ (or $ {0} $). Denote $ A\otimes B $ as the Kronecker product of matrices $ A $ and $ B $. Take $ col(x_{1},\cdots,x_{n}) $$  = (x^{\mathrm{T}}_{1}, \cdots,x^{\mathrm{T}}_{n})^{\mathrm{T}} $and $\|\cdot\|$ as the Euclidean norm of vectors. Denote $ \nabla f $ as the gradient of function $  f $. Denote $\mathbf{B}_{r}(x)\subseteq \mathbb{R}^{n}$ as a ball with the center at point $x$ and the radius $r$.
Moreover, denote $ \mathbf{E}_{v}(c)\subseteq \mathbb{R}^{n} $ as an ellipsoid that 
$$
\sum_{i=1}^{n}\frac{(x_{i}-c_{i})^{2}}{v_{i}^{2}}\leq1,\;  
$$ 
with the center at point $ c\triangleq(c_{1},\cdots,c_{n}) $ and the semiaxis  $v\triangleq(v_{1},\cdots,v_{n})$.

A set $ K \subseteq \mathbb{R}^{n} $ is convex if $ \omega x_{1}+(1-\omega)x_{2} \in K$ for  any
$ x_{1}, x_{2}\in K $ and $ 0\leq\omega\leq 1   $. For a closed convex set $ K $, the projection map $ \Pi_{K}: \mathbb{R}^{n} \rightarrow K $ is defined as
$$
\Pi_{K}(x) \triangleq \underset{y \in K}{\operatorname{argmin}}\|x-y\| .
$$
The following basic property hold:
$$
\|\Pi_{K}(x)-\Pi_{K}(y)\| \leq \|x-y\|,\quad \forall x, y \in \mathbb{R}^{n}.
$$
A function $ f:\mathbb{R}^{n} \rightarrow \mathbb{R} $ is convex on $ K $ if 
$$ f (\omega x_{1}+(1-\omega)x_{2})\leq \omega f(x_{1})+(1-\omega)f(x_{2}), $$ 
for any $ x_{1} $, $ x_{2}\in K $ and $ 0\leq \omega\leq1 $.

A mapping $ F : \mathbb{R}^{n} \rightarrow \mathbb{R}^{n} $ is said to be $ \kappa $-strongly monotone on a set $ D $ if there exists a constant $ \kappa > 0 $  such that
$$
(F(x)-F(y))^{\mathrm{T}}(x-y) \geq \kappa\|x-y\|^{2}, \quad \forall x, y \in D.
$$
Given a set $ D \subseteq \mathbb{R}^{n} $ and a map $ F: D \rightarrow \mathbb{R}^{n} $, the variational inequality problem $\mathrm{VI}(D, F)$ is defined to find a vector $ x^{*}\in D  $ such that $$
\left(y-x^{*}\right)^{\mathrm{T}} F\left(x^{*}\right) \geq 0, \quad \forall y \in D,
$$
whose  solution  is denoted by $\mathrm{SOL}(D, F)$.

The following lemma shows an equivalent relationship between the solution of $\mathrm{VI}(D, F)$ and the projection map, and discusses the properties of the solution of $\mathrm{VI}(D, F)$. Readers can find more details in \cite[Proposition 1.5.8, Corollary 2.2.5, and Theorem 2.3.3]{facchinei2007finite}.
\begin{lemma}\label{p1}
	Consider $\mathrm{VI}(D, F)$, where the set $ D \subseteq \mathbb{R}^{n} $ is convex and the map $ F : D \rightarrow \mathbb{R}^{n} $ is continuous. The following statements hold:
	\begin{enumerate}[(1)]
		\item $
		x \in \operatorname{SOL}(D, F) \Leftrightarrow x=\Pi_{D}(x- \theta F(x)), \forall \theta>0
		$;
		\item if $ D $ is compact, then $\mathrm{SOL}(D, F)$ is nonempty and compact;
		\item if $ D $ is closed and $ F(x) $ is strongly monotone, then $\mathrm{VI}(D, F)$ has at most one solution.
	\end{enumerate}	
\end{lemma}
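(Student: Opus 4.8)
The plan is to prove the three statements separately, in each case reducing to the variational characterization of the Euclidean projection: for a closed convex set $D\subseteq\mathbb{R}^n$ and any $z\in\mathbb{R}^n$, one has $p=\Pi_D(z)$ if and only if $p\in D$ and $(y-p)^{\mathrm{T}}(z-p)\le 0$ for all $y\in D$.

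For part (1), I would substitute $z=x-\theta F(x)$ and $p=x$ into this characterization. The inequality $(y-x)^{\mathrm{T}}\bigl((x-\theta F(x))-x\bigr)\le 0$ for all $y\in D$ reduces, after dividing by the positive constant $\theta$, to $(y-x)^{\mathrm{T}}F(x)\ge 0$ for all $y\in D$, which is exactly the defining condition of $\mathrm{VI}(D,F)$. Both implications then follow immediately, and since $\theta>0$ was arbitrary the equivalence holds for every $\theta>0$.

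For part (2), I would fix $\theta=1$ and consider the map $\phi:D\to D$ given by $\phi(x)=\Pi_D(x-F(x))$. Because the projection onto a closed convex set is nonexpansive (hence continuous) by the basic property recorded above, and $F$ is continuous by hypothesis, $\phi$ is a continuous self-map of the nonempty compact convex set $D$; Brouwer's fixed point theorem yields some $x^{*}\in D$ with $\phi(x^{*})=x^{*}$, which is a solution of $\mathrm{VI}(D,F)$ by part (1), so $\mathrm{SOL}(D,F)$ is nonempty. For compactness, observe that $\mathrm{SOL}(D,F)=\bigcap_{y\in D}\{\,x\in D:(y-x)^{\mathrm{T}}F(x)\ge 0\,\}$, an intersection of sets that are closed by continuity of $F$; being a closed subset of the compact set $D$, it is compact. (Equivalently, it is the closed fixed-point set of the continuous map $\phi$.)

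For part (3), suppose $x_1,x_2\in\mathrm{SOL}(D,F)$. Taking $y=x_2$ in the inequality for $x_1$ and $y=x_1$ in the inequality for $x_2$ and adding the two gives $(x_1-x_2)^{\mathrm{T}}(F(x_1)-F(x_2))\le 0$, while strong monotonicity gives $(F(x_1)-F(x_2))^{\mathrm{T}}(x_1-x_2)\ge\kappa\|x_1-x_2\|^2$; hence $\kappa\|x_1-x_2\|^2\le 0$, forcing $x_1=x_2$. The algebraic steps in parts (1) and (3) are routine; the only substantive ingredient is the existence claim in part (2), which rests on Brouwer's fixed point theorem applied to the projection map, with the continuity of that map following directly from the nonexpansiveness of $\Pi_D$. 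Since all three statements are classical, the detailed verification can be kept brief and the cited results in \cite{facchinei2007finite} may be invoked.
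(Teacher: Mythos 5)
Your proposal is correct. The paper does not actually prove this lemma --- it simply cites \cite[Proposition 1.5.8, Corollary 2.2.5, and Theorem 2.3.3]{facchinei2007finite} --- and your argument is exactly the classical one underlying those cited results: the variational characterization of the Euclidean projection for part (1), Brouwer's fixed point theorem applied to $x\mapsto\Pi_D(x-F(x))$ together with closedness of the solution set for part (2), and the standard addition of the two VI inequalities against strong monotonicity for part (3). All three steps check out (in part (2) one implicitly uses that $D$ is nonempty as well as compact and convex, which is the standing convention here), so your write-up is a valid self-contained substitute for the citation.
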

Take $ X,\, Z\subseteq  \mathbb{R}^{n} $ as two non-empty sets. For $ y \in \mathbb{R}^{n}  $,  denote $ dist(y,Z) $ as the distance between $ y $ and $ Z $, i.e.,
$$  dist(y,Z) = \inf \limits_{z\in Z}\|y- z\|.  $$
Define the Hausdorff metric of $ X,Z\subseteq  \mathbb{R}^{n} $ by $$ H(X,Z) = \max\{\sup\limits_{x\in X}dist(x,Z),\sup\limits_{z\in Z}dist(z,X) \}.$$The Hausdorff metric integrates all compact sets into a metric space.

A directed graph  is defined as $ \mathcal{G} = (\mathcal{I},\mathcal{E}) $
with the node set  $ \mathcal{I} =\{1,2,\cdots,N\} $ and the edge set $ \mathcal{E} $. $ A = [a_{i j}] \in \mathbb{R}^{n\times n} $ is the adjacency matrix of $ \mathcal{G} $ such that if $ (j, i) \in \mathcal{E}  $, then $ a_{i j}> 0 $,   which
means that $j$ belongs to  $ i $'s neighbor set 
and   $ i $ can receive the message sent from agent $ j $, and $ a_{i j}= 0 $ otherwise. A graph is said to be strongly connected if there is a sequence of intermediate vertices connected by edges for any pair of vertices.
A graph is weight-balanced if $\sum_{j=1}^{N} a_{i j}=\sum_{j=1}^{N} a_{ji}$ for every $i \in \mathcal{I} $. The Laplacian matrix is $ L = \Delta-A $, where $\Delta=\operatorname{diag}\left\{d_{1}, \ldots, d_{N}\right\} \in \mathbb{R}^{N \times N}$ with $ d_{i}=\sum_{j=1}^{N} a_{i j} $.

The following lemma is about the Laplacian matrix \cite{bullo2009distributed}.
\begin{lemma}\label{p4}
	Considering a directed graph $ \mathcal{G} $,	
	\begin{enumerate}[(1)]
		\item  $ \mathcal{G} $ is weight-balanced if and only if $ L+ L^\mathrm{T}$ is positive semidefinite;
		\item  $ \mathcal{G} $ is strongly connected if and only if zero is a simple eigenvalue of $  L $.
	\end{enumerate}
\end{lemma}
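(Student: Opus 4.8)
The plan is to dispatch the two items separately, since each reduces to a classical algebraic fact about the Laplacian $L=\Delta-A$. Throughout I would keep in hand two identities valid for \emph{every} weighted digraph: first, $L\boldsymbol{1}_{N}=\boldsymbol{0}_{N}$, which is immediate from $d_{i}=\sum_{j}a_{ij}$; and second, since $\Delta$ is diagonal, $L+L^{\mathrm{T}}=2\Delta-(A+A^{\mathrm{T}})$.

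For item (1) I would argue both directions from the second identity. If $\mathcal{G}$ is weight-balanced, then $d_{i}=\sum_{j}a_{ij}=\sum_{j}a_{ji}$, so $\Delta$ coincides with the degree matrix of the \emph{undirected} weighted graph whose symmetric adjacency is $\tfrac{1}{2}(A+A^{\mathrm{T}})$; hence $\tfrac{1}{2}(L+L^{\mathrm{T}})=\Delta-\tfrac{1}{2}(A+A^{\mathrm{T}})$ is the Laplacian of that undirected graph, and expressing it as a sum of positive semidefinite rank-one outer products indexed by edges shows it is positive semidefinite. For the converse, note that $\boldsymbol{1}_{N}^{\mathrm{T}}(L+L^{\mathrm{T}})\boldsymbol{1}_{N}=2\boldsymbol{1}_{N}^{\mathrm{T}}L\boldsymbol{1}_{N}=0$ for every digraph; if in addition $L+L^{\mathrm{T}}$ is positive semidefinite, then a vector that annihilates a positive semidefinite quadratic form lies in its kernel, so $(L+L^{\mathrm{T}})\boldsymbol{1}_{N}=\boldsymbol{0}_{N}$, and combined with $L\boldsymbol{1}_{N}=\boldsymbol{0}_{N}$ this forces $L^{\mathrm{T}}\boldsymbol{1}_{N}=\boldsymbol{0}_{N}$, i.e.\ the column sums of $L$ vanish, which read off coordinate-wise is exactly $\sum_{j}a_{ij}=\sum_{j}a_{ji}$ for all $i$.

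For item (2), the substantive direction — strong connectivity $\Rightarrow$ $0$ is a simple eigenvalue of $L$ — I would obtain from the Perron--Frobenius theorem by a nonnegative shift. Strong connectivity is equivalent to irreducibility of $A$; choosing $c\geq\max_{i}d_{i}$, the matrix $M\triangleq cI_{N}-L=(cI_{N}-\Delta)+A$ is nonnegative and, sharing the off-diagonal sign pattern of $A$, also irreducible, while $L\boldsymbol{1}_{N}=\boldsymbol{0}_{N}$ gives $M\boldsymbol{1}_{N}=c\boldsymbol{1}_{N}$. Thus $M$ has the strictly positive eigenvector $\boldsymbol{1}_{N}$, and Perron--Frobenius then forces $c$ to be the spectral radius of $M$ and a simple eigenvalue; hence $0=c-\rho(M)$ is a simple eigenvalue of $L$. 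For the reverse implication I would pass to the condensation of $\mathcal{G}$: reordering vertices by strongly connected components puts $L$ in block-triangular form, with each "source" component contributing its own independent element of $\ker L$, and the cleanest rigorous route to controlling the multiplicity is the directed matrix--tree theorem, which identifies the coefficient of $\lambda$ in $\det(\lambda I_{N}-L)$ with a weighted count of spanning in-trees.

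I expect everything except the reverse implication of item (2) to be short. That implication is the delicate point: "$0$ is a simple eigenvalue of $L$" is the quantity most sensitive to the orientation/degree convention, and tying it back to strong connectivity — rather than to the strictly weaker condition that the condensation of $\mathcal{G}$ have a single source strongly connected component — is where the real care is required. Consequently the bulk of a full write-up would go into making the condensation / matrix--tree bookkeeping precise, or one may simply invoke \cite{bullo2009distributed} for it, as the lemma statement does.
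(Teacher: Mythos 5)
The paper offers no proof of this lemma at all: it is quoted as a known fact with a citation to \cite{bullo2009distributed}, so there is nothing internal to compare against. Your item (1) and the forward half of item (2) are correct and are exactly the standard textbook arguments: the symmetrization $\tfrac{1}{2}(L+L^{\mathrm T})=\Delta-\tfrac12(A+A^{\mathrm T})$ being an undirected Laplacian under weight balance, the kernel argument from $\boldsymbol{1}_N^{\mathrm T}(L+L^{\mathrm T})\boldsymbol{1}_N=0$ for the converse, and the Perron--Frobenius shift $M=cI_N-L$ for ``strongly connected $\Rightarrow$ $0$ simple.''

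The genuine gap is the reverse implication of item (2), and it is not merely ``delicate'' as you suggest --- for a general digraph it is false, so no amount of condensation bookkeeping will close it. Take the directed path $1\to2\to\cdots\to N$ under the paper's convention $a_{i,i-1}>0$: then $d_1=0$, $d_i=1$ for $i\geq 2$, so $L$ is triangular with diagonal $(0,1,\dots,1)$ and $0$ is a simple eigenvalue, yet the graph is not strongly connected. What the condensation/matrix--tree argument actually yields is the correct equivalence: $0$ is a simple eigenvalue of $L$ if and only if the condensation has a unique source component, i.e.\ $\mathcal{G}$ contains a globally reachable node (equivalently a spanning in-branching) --- which is strictly weaker than strong connectivity, exactly the distinction you flagged but did not resolve. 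You should either prove only the implication that is true, or add the hypothesis under which the equivalence is restored; note that in this paper the graph is also assumed weight-balanced, and a weight-balanced digraph is strongly connected precisely when it is weakly connected, so under the standing Assumption 1 the two conditions coincide and nothing downstream of the lemma is affected.
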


\subsection{Problem Formulation}
Consider {an} $N$-player aggregative game,  where the players are indexed  by $\mathcal{I} = \{1,\cdots,N\}$. For each $i \in \mathcal{I}$, the $ i $th player has an action variable $ x_{i} $ in a local feasible set $  \Omega_{i}\subseteq \mathbb{R}^{n} $. Denote $   \boldsymbol{\Omega}\triangleq\prod_{i=1}^{N}\Omega_{i} \subseteq \mathbb{R}^{nN} $, $ \boldsymbol{x}\triangleq col\{x_{1}, . . . ,x_{N}\} \in \boldsymbol{\Omega} $ as the action profile for all players, and $ \boldsymbol{x}_{-i}\triangleq col\{x_{1}, . . . ,x_{i-1}, x_{i+1}, . . . ,x_{N}\}$ as the action profile for all players except player $ i $.

The $ i $th player has a payoff function $ J_{i}(x_{i},\boldsymbol{x}_{-i}) : \mathbb{R}^{nN}\rightarrow \mathbb{R} $.  Define an  aggregative term   as
$$\mathcal{Q}(\boldsymbol{x}) \triangleq \frac{1}{N} \sum_{i=1}^{N} q_{i}\left(x_{i}\right).$$ Here $ q_{i}: \mathbb{R}^{n}\rightarrow \mathbb{R}^{M} $ is a map for the local contribution to the aggregation. Specifically,  $ J_{i}\left(x_{i}, \boldsymbol{x}_{-i}\right)=f_{i}(x_{i}, \mathcal{Q}(\boldsymbol{x}))  $ with the function $f_{i}: \mathbb{R}^{n+M}\rightarrow \mathbb{R}$. Given $ \boldsymbol{x}_{-i} $, the $ i $th player intends to solve
\begin{equation}\label{f1}
	\min \limits_{x_{i} \in \Omega_{i}} J_{i}\left(x_{i}, \boldsymbol{x}_{-i}\right).
\end{equation}
\begin{mydef}[Nash equilibrium]\label{d1}
	A profile $ \boldsymbol{x}^{*} $ is said to be a Nash equilibrium (NE) of game (\ref{f1}) if
	\begin{equation*}\label{e1}
		J_{i}\left(x_{i}^{*}, \boldsymbol{x}_{-i}^{*}\right) \leq J_{i}\left(x_{i}, \boldsymbol{x}_{-i}^{*}\right),\;\forall i \in \mathcal{I},\;\forall x_{i}\in \Omega_{i}.
	\end{equation*}
\end{mydef}

In reality with various uncertainties,  NE may not exist or be easily calculated. Therefore, we introduce the following  definition.
\begin{mydef}[$ \epsilon $-Nash equilibrium]\label{d2}
	A profile $ \boldsymbol{x}^{*} $ is said to be an $ \epsilon $-Nash equilibrium of game (\ref{f1})  if
	\begin{equation}\label{e6}
		J_{i}\left(x_{i}^{*}, \boldsymbol{x}_{-i}^{*}\right) \leq J_{i}\left(x_{i}, \boldsymbol{x}_{-i}^{*}\right)+\epsilon,\;\forall i \in \mathcal{I},\;\forall x_{i}\in \Omega_{i},\;
	\end{equation}
	where the constant $ \epsilon>0 $. Particularly, $ \boldsymbol{x}^{*} $  is said to be a NE when $ \epsilon= 0 $.
\end{mydef}

 The local payoff functions $J_{i}$, set constraints $\Omega_{i}$, and decision
variables $ x_{i} $ are the private  information. Moreover, the aggregative term $ \mathcal{Q}(\boldsymbol{x})  $  contains all the players' decisions, which cannot be observed by each player directly. Thus,
player $ i $ generates an estimate $ \zeta_i $ of this aggregative term and exchange this information with its local neighbors through a network $\mathcal{G} $.

For clarification, we denote the pseudo-gradient by $$F(\boldsymbol{x}) \triangleq col\left\{\nabla_{x_{1}} J_{1}\left(\cdot, \boldsymbol{x}_{-1}\right), \ldots, \nabla_{x_{N}} J_{N}\left(\cdot, \boldsymbol{x}_{-N}\right)\right\}.$$
Define the map $ U_{i}: \mathbb{R}^{n}\times\mathbb{R}^{M} \rightarrow \mathbb{R}$ as
\begin{align}\label{aaa}
	U_{i}\left(x_{i}, \zeta_{i}\right) &\left.\triangleq \nabla_{\boldsymbol{x}_{i}} J_{i}\left(\cdot, \boldsymbol{x}_{-i}\right)\right|_{\mathcal{Q}(\boldsymbol{x})=\zeta_{i}} \\
	&=\left.\left(\nabla_{x_{i}} f_{i}(\cdot, \mathcal{Q})+\frac{1}{N} \nabla_{\mathcal{Q}} f_{i}\left(x_{i}, \cdot\right)^{\mathrm{T}} \nabla q_{i}\right)\right|_{\mathcal{Q}=\zeta_{i}}.\notag
\end{align}
Let $$U(\boldsymbol{x}, \boldsymbol{\zeta}) \triangleq col\left(U_{1}\left(x_{1}, \zeta_{1}\right),\ldots, U_{N}\left(x_{N}, \zeta_{N}\right)\right).$$
Obviously, $U\left(\boldsymbol{x}, \mathbf{1}_{N} \otimes \mathcal{Q}(\boldsymbol{x})\right)=F(\boldsymbol{x})$.

We give the following assumptions for game (\ref{f1}).

\begin{assumption}
	$\ $
	\begin{itemize}
		\item For $ i \in \mathcal{I} $, $ \Omega_{i} $ is compact and convex.	
		\item For $ i \in \mathcal{I} $, the payoff function $ J_{i}(\cdot) $ is Lipschitz continuous in $ \boldsymbol{x} $, while $ J_{i}(\cdot,\boldsymbol{x}_{-i}) $  and the map $ q_{i}(x_{i}) $ are   continuously differentiable in $  x_{i} $. Moreover, the pseudo-gradient $F(\boldsymbol{x}) $ is $ \kappa $-strongly monotone on the set $ \boldsymbol{\Omega}  $.
		\item  The map $ U(\boldsymbol{x}, \boldsymbol{\zeta}) $ is $ c_{1}$-Lipschitz continuous in $ \boldsymbol{x}\in  \boldsymbol{\Omega}$  and  $ c_{2}$-Lipschitz continuous in $ \boldsymbol{\zeta} $ for some constants $ c_{1}$, $ c_{2}>0$. Besides, for  $ i \in \mathcal{I} $, $ q_{i} $ is $ c_{3}$-Lipschitz continuous on $ \Omega_{i} $ for a constant $ c_{3}>0$.
		\item The communication network $ \mathcal{G} $ is  strongly connected and weight-balanced.	
	\end{itemize}
\end{assumption}

The  assumptions of convexity and differentiability about payoff functions are quite common and {have been} widely used in the literature.  
Besides, the strong monotonicity of the pseudo-gradient map $ F $ has been widely adopted to guarantee the uniqueness of  NE  \cite{yi2019operator,zhang2019distributed11,lu2018distributed}. Additionally, the assumption about  the Lipschitz continuity of  $ U(\boldsymbol{x}, \boldsymbol{\zeta}) $ and  $ q_i$ is the same as that given in  \cite{liang2019distributed}. Moreover, the strongly connected and weight-balanced digraph is a generalization of connected undirected graphs
in   \cite{deng2019distributed,tang2018distributed}, and is also employed in some other distributed algorithms \cite{liang2020distributed,deng2018distributed}.

The following lemma reveals the relationship of a NE  $ \boldsymbol{x}^{*} $ and a solution to $ \mathrm{VI}(\boldsymbol{\Omega}, F(\boldsymbol{x}))$, referring to \cite[Proposition 1.4.2]{facchinei2007finite} and Lemma \ref{p1}.

\begin{lemma}\label{p3}
	Under Assumption 1, a profile $ \boldsymbol{x}^{*} $ is a NE if and only if $$ \boldsymbol{x}^{*} \in \mathrm{SOL}(\boldsymbol{\Omega}, F(\boldsymbol{x})).$$ Moreover, the  game (\ref{f1}) admits a unique  Nash equilibrium  $ \boldsymbol{x}^{*} $.		
\end{lemma}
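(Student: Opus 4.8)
The plan is to prove the two assertions separately, each by reduction to material already in the excerpt: (i) the equivalence ``$\boldsymbol{x}^{*}$ is a NE $\Leftrightarrow$ $\boldsymbol{x}^{*}\in\mathrm{SOL}(\boldsymbol{\Omega},F)$'' via the classical game-to-variational-inequality passage, and (ii) existence and uniqueness via Lemma \ref{p1}.

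First I would establish the variational characterization. Fix a profile $\boldsymbol{x}^{*}$. By Definition \ref{d1}, $\boldsymbol{x}^{*}$ is a NE exactly when, for every $i\in\mathcal{I}$, the component $x_{i}^{*}$ solves $\min_{x_{i}\in\Omega_{i}}J_{i}(x_{i},\boldsymbol{x}_{-i}^{*})$; under Assumption 1 this is a convex program with $\Omega_{i}$ convex and $J_{i}(\cdot,\boldsymbol{x}_{-i})$ convex and continuously differentiable in $x_{i}$. The first-order minimum principle for a differentiable convex function over a convex set then makes this equivalent to $(x_{i}-x_{i}^{*})^{\mathrm{T}}\nabla_{x_{i}}J_{i}(x_{i}^{*},\boldsymbol{x}_{-i}^{*})\geq 0$ for all $x_{i}\in\Omega_{i}$. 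Because $\boldsymbol{\Omega}=\prod_{i=1}^{N}\Omega_{i}$ has product structure, every feasible direction $\boldsymbol{x}-\boldsymbol{x}^{*}$ with $\boldsymbol{x}\in\boldsymbol{\Omega}$ decomposes blockwise, and conversely each single-block perturbation is admissible; hence stacking these $N$ inequalities is equivalent to $(\boldsymbol{x}-\boldsymbol{x}^{*})^{\mathrm{T}}F(\boldsymbol{x}^{*})\geq 0$ for all $\boldsymbol{x}\in\boldsymbol{\Omega}$, i.e.\ $\boldsymbol{x}^{*}\in\mathrm{SOL}(\boldsymbol{\Omega},F)$. This is precisely the content of \cite[Proposition 1.4.2]{facchinei2007finite}.

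Next I would invoke Lemma \ref{p1} to get existence and uniqueness of the VI solution. The set $\boldsymbol{\Omega}$ is compact and convex as a finite product of compact convex sets $\Omega_{i}$, and the map $F$ is continuous on $\boldsymbol{\Omega}$ since each $J_{i}(\cdot,\boldsymbol{x}_{-i})$ is continuously differentiable; therefore Lemma \ref{p1}(2) gives that $\mathrm{SOL}(\boldsymbol{\Omega},F)$ is nonempty (and compact). Moreover $\boldsymbol{\Omega}$ is closed and, by Assumption 1, $F$ is $\kappa$-strongly monotone on $\boldsymbol{\Omega}$, so Lemma \ref{p1}(3) shows the VI has at most one solution. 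Combining, $\mathrm{SOL}(\boldsymbol{\Omega},F)$ is a singleton $\{\boldsymbol{x}^{*}\}$, and by the equivalence just proved $\boldsymbol{x}^{*}$ is the unique Nash equilibrium of game (\ref{f1}).

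I do not expect a serious obstacle here, since the argument is essentially bookkeeping over the cited results; the only point requiring care is making the game-to-VI passage rigorous in both directions, in particular using the product structure of $\boldsymbol{\Omega}$ to move freely between the $N$ individual-player optimality conditions and the single aggregated variational inequality, and checking that the convexity/differentiability hypotheses packaged in Assumption 1 are exactly those needed by the minimum principle and by \cite[Proposition 1.4.2]{facchinei2007finite}.
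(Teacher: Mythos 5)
Your proof is correct and follows exactly the route the paper intends: the paper gives no separate argument for Lemma \ref{p3}, merely citing \cite[Proposition 1.4.2]{facchinei2007finite} for the NE--VI equivalence and Lemma \ref{p1}(2)--(3) for existence and uniqueness, which is precisely what you have spelled out (your blockwise decomposition over the product set $\boldsymbol{\Omega}$ is the standard content of that cited proposition). The only cosmetic point is that convexity of $J_{i}(\cdot,\boldsymbol{x}_{-i})$ is not listed verbatim in Assumption 1; it is, however, implied by the $\kappa$-strong monotonicity of $F$ restricted to perturbations in a single block, so your appeal to the convex minimum principle is justified.
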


Therefore, the main task of this paper is to design a distributed algorithm for seeking a NE of the  aggregative game (\ref{f1}). 
Due to players' local feasible sets,  projection-based methods have been widely used to solve related problems in the literature, e.g.
\cite{belgioioso2020distributed,yi2019asynchronous,lei2020distributed}.
Sometimes, it is  not so easy  to obtain the exact projection points  in practice.
In the following sections, we  provide a scheme to reduce the complexity with an approximate solution. 

	\section{Problem approximation}\label{s3}

As we know, it is always easier to obtain the projection points on the hyperplanes than on the boundaries of general set constraints. Therefore, in this section,  we use inscribed polyhedrons to approximate the players' local feasible sets.

An inscribed polyhedron of a closed convex set is defined as a polyhedron with  all its vertices  on the boundary of the convex set. These vertices construct a series of hyperplanes naturally, which enclose an inscribed polyhedron. Denote  $  \boldsymbol{\mathcal{D}}_{s}  =\prod_{i=1}^{N} \mathcal{D}^{i}_{s_{i}} $, where $ \mathcal{D}^{i}_{s_{i}} $ is an inscribed polyhedron of $ \Omega_{i} $ with $ s_{i} $ vertices, expressed as
\begin{equation}\label{bbb}
	\mathcal{D}_{s_{i}}^{i}=\left\{x_{i} \in \mathbb{R}^{n}: B^{i} x_{i} \leq b^{i}\right\}.
\end{equation}
{Here $ B^{i} \in \mathbb{R}^{p_{i}\times n}  $
	represent  normal vectors of the hyperplanes enclosing  $ \mathcal{D}_{s_{i}}^{i} $ with normalized rows,   $ b_{i} \in \mathbb{R}^{p_{i}} $  are the distances from the hyperplanes to the origin point, and  $ p_{i} $ is the number of hyperplanes for $ i\in \mathcal{I} $.}

The approximation of convex sets by inscribed polyhedrons has been studied in different problems \cite{dudley1974metric,bronshtein1976varepsilon}, which
can  be explicitly expressed by  linear inequalities.
Here our approximation of inscribed polyhedrons  concentrates on players' local feasible sets,  different from the   approximate view angles \cite{ lou2014approximate}, and the  approximation for system parameters \cite{chen2021distributed}.
In fact, the approximate process  with  inscribed polyhedrons makes projection on a polyhedron easier than  directly on a general set,  because 
the projection of point $ x_{0} $ on  a hyperplane $ D=\left\{x | B_{0}^{\mathrm{T}} x = b_{0}\right\} $ can be written explicitly as
$ \Pi_{D}(x_{0})=x_{0} +(b_{0}-B_{0}^{\mathrm{T}} x_{0})B_{0}/\|B_{0}\|^{2}$, 	
which can save the corresponding computational cost.

Thereby, with the help of  inscribed polyhedrons, we consider a related approximate game,
\begin{equation}\label{e9}
	\begin{array}{l}
		\min \limits_{x_{i}\in \mathcal{D}_{s_{i}}^{i} } J_{i}\left(x_{i}, \boldsymbol{x}_{-i}\right) 	.
	\end{array}
\end{equation}

Before revealing the relationship between the approximate game (\ref{e9}) and the original game (\ref{f1}), we first discuss how the Hausdorff distance between two different inscribed polyhedrons influences the relationship of the normal vectors of their hyperplanes. Denote  $\mathcal{D}_{s_{1}}^{1}$ as  an inscribed polyhedron of  a convex and compact set $ \Omega \subseteq \mathbb{R}^{n} $ with $ W_{1} $ as the set of vertices on the boundary of $ \Omega $, i.e.,
\begin{equation}\label{s1}	 \mathcal{D}_{s_{1}}^{1}=\left\{x \in \mathbb{R}^{n}: B^{1} x \leq b^{1}\right\}.
\end{equation}
Similarly, denote $\mathcal{D}_{s_{2}}^{2}$ as another inscribed polyhedron with $ W_{2} $ as the set of vertices on the boundary of $ \Omega $, where $ W_{2}= W_{1}\cup \{w_{0}\} $ with $ w_{0} $ as an additional vertex, i.e.,
\begin{equation}\label{s2} \mathcal{D}_{s_{2}}^{2}=\left\{x \in \mathbb{R}^{n}: B^{2} x \leq b^{2}\right\}.
\end{equation}
Suppose that there are $ p_{1} $ rows of $ B^{1} $ and $ b^{1} $, $ p_{2} $ rows of $ B^{2} $ and $ b^{2} $, the first $ p_{1}-1 $ rows of $ B^{1} $ are the same as the first $ p_{1}-1 $ rows of $ B^{2} $. As a result, the two matrices can be written by row as
\begin{equation}\label{bb}
	B^{1}=\left[\begin{array}{c}B_{1} \\ \vdots \\ B_{p_{1}-1} \\ B_{p_{1}}^{1}\end{array}\right], \quad B^{2}=\left[\begin{array}{c}B_{1} \\ \vdots \\ B_{p_{1}-1} \\ B_{p_{1}}^{2} \\ \vdots \\ B_{p_{2}}^{2}\end{array}\right] .
\end{equation}
%
\begin{lemma}\label{l1}
	For $ B^{2}_{i} $ as any row of matrix $ B^{2} $,  there exists   a  corresponding row  $ B^{1}_{j(i)} $ of matrix $ B^{1} $ such that  	
	$$	
	\left\|B_{i}^{2}-B_{j(i)}^{1}\right\| \rightarrow0, \quad\mathrm{as}\; H(\mathcal{D}_{s_{1}}^{1},\mathcal{D}_{s_{2}}^{2}) \rightarrow 0.$$
\end{lemma}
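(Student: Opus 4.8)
The plan is to separate the rows of $B^{2}$ into the $p_{1}-1$ rows it shares with $B^{1}$ and the remaining ``new'' rows $i=p_{1},\dots,p_{2}$. For $i\le p_{1}-1$ one has $B^{2}_{i}=B^{1}_{i}$ exactly, so $j(i)=i$ makes the claimed limit trivial (the distance being $0$); all the content lies in showing that each new row $B^{2}_{i}$ converges to $B^{1}_{p_{1}}$, the unique row of $B^{1}$ absent from $B^{2}$, after which $j(i)=p_{1}$ serves for every new $i$.

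First I would translate $H(\mathcal{D}^{1}_{s_{1}},\mathcal{D}^{2}_{s_{2}})\to 0$ into a statement about $w_{0}$. Since $W_{1}\subseteq W_{2}$ we have $\mathcal{D}^{1}_{s_{1}}\subseteq\mathcal{D}^{2}_{s_{2}}=\mathrm{conv}(\mathcal{D}^{1}_{s_{1}}\cup\{w_{0}\})$, so every point of $\mathcal{D}^{2}_{s_{2}}$ is a convex combination of a point of $\mathcal{D}^{1}_{s_{1}}$ and $w_{0}$, whence $\sup_{z\in\mathcal{D}^{2}_{s_{2}}}dist(z,\mathcal{D}^{1}_{s_{1}})=dist(w_{0},\mathcal{D}^{1}_{s_{1}})$ and $H(\mathcal{D}^{1}_{s_{1}},\mathcal{D}^{2}_{s_{2}})=dist(w_{0},\mathcal{D}^{1}_{s_{1}})$. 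We may assume $w_{0}\notin\mathcal{D}^{1}_{s_{1}}$ (otherwise the two polyhedra coincide and there is nothing to prove), and ``the first $p_{1}-1$ rows coincide'' forces $w_{0}$ to violate only the $p_{1}$-th constraint $(B^{1}_{p_{1}})^{\mathrm{T}}x\le b^{1}_{p_{1}}$ of $\mathcal{D}^{1}_{s_{1}}$; a standard normal-cone argument then places the $\mathcal{D}^{1}_{s_{1}}$-nearest point to $w_{0}$ on the facet $F:=\{x\in\mathcal{D}^{1}_{s_{1}}:(B^{1}_{p_{1}})^{\mathrm{T}}x=b^{1}_{p_{1}}\}$, so that $H(\mathcal{D}^{1}_{s_{1}},\mathcal{D}^{2}_{s_{2}})=dist(w_{0},F)$. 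Thus $H\to 0$ means precisely that $w_{0}$ collapses onto the facet $F$.

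Next I would use that adding one vertex destroys only this single facet: $\mathcal{D}^{2}_{s_{2}}$ is $\mathcal{D}^{1}_{s_{1}}$ with the pyramid $P:=\mathrm{conv}(\{w_{0}\}\cup F)$ glued on along $F$ (one checks $\mathcal{D}^{2}_{s_{2}}=\mathcal{D}^{1}_{s_{1}}\cup P$ by following segments from $w_{0}$ across the hyperplane carrying $F$). A facet of $\mathcal{D}^{2}_{s_{2}}$ with all vertices in $W_{1}$ would already be a facet of $\mathcal{D}^{1}_{s_{1}}$, so each new facet contains $w_{0}$; and the facets of $P$ other than its base $F$ are exactly $\mathrm{conv}(\{w_{0}\}\cup G)$ with $G$ an $(n-2)$-dimensional face of $F$. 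Hence each new $F^{2}_{i}$ equals $\mathrm{conv}(\{w_{0}\}\cup G_{i})$ for such a $G_{i}$, and its supporting hyperplane is $\mathrm{aff}(\{w_{0}\}\cup G_{i})$. Writing $\pi$ for orthogonal projection onto $\mathrm{aff}(F)=\{x:(B^{1}_{p_{1}})^{\mathrm{T}}x=b^{1}_{p_{1}}\}$ and $w_{0}':=\pi(w_{0})$, we have $\|w_{0}-w_{0}'\|\le dist(w_{0},F)=H\to 0$; the unit normal of $\mathrm{aff}(\{w_{0}\}\cup G_{i})$ is a continuous function of $w_{0}$ (say, via Gram--Schmidt from $w_{0}$ together with an affine frame of $G_{i}$) as long as those $n$ points remain affinely independent, and at $w_{0}=w_{0}'\in\mathrm{aff}(F)$ it is the unit normal of $\mathrm{aff}(F)$, namely $\pm B^{1}_{p_{1}}$; fixing the outward orientation consistently for $\mathcal{D}^{1}_{s_{1}}$ and $\mathcal{D}^{2}_{s_{2}}$ gives $B^{2}_{i}\to B^{1}_{p_{1}}$, and $j(i)=p_{1}$ works.

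The step I expect to be the main obstacle is exactly the affine-independence proviso in the last display. A point $w_{0}\in\partial\Omega$ with $dist(w_{0},F)\to 0$ need not collapse onto the relative interior of $F$: it may approach the relative boundary of $F$, hence a point lying in $\mathrm{aff}(G_{i})$ for some $(n-2)$-face $G_{i}$ of $F$, and then $\{w_{0}'\}\cup G_{i}$ is affinely dependent, the normal map degenerates at $w_{0}'$, and $B^{2}_{i}$ need not stay close to $B^{1}_{p_{1}}$ — or to any other row of $B^{1}$ (for strictly convex $\Omega$ this is not a rare pathology: the limit point of $w_{0}$ is then necessarily a vertex of $F$, and the new facets sitting on the faces of $F$ through that vertex are precisely the degenerate ones). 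So the argument as sketched really needs an extra ingredient — for instance a hypothesis keeping $w_{0}$ away from the relative boundary of $F$ (quantitatively: $dist(w_{0},\mathrm{aff}(G_{i}))$ not vanishing faster than $dist(w_{0},\mathrm{aff}(F))$ for the relevant $G_{i}$), or a restriction on the admissible refinements excluding such configurations — and with any such control the continuity argument above goes through routinely.
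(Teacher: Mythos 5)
Your decomposition matches the paper's: the rows $B^2_i$ with $i\le p_1-1$ are matched identically, every new facet of $\mathcal{D}_{s_2}^{2}$ contains $w_0$ and sits over a face of the removed facet $\mathcal{M}_0$ of $\mathcal{D}_{s_1}^{1}$, and the lemma reduces to showing that the angle $\tau_i$ between each new normal $B^2_i$ and the discarded normal $B^1_{p_1}$ vanishes. However, the step you flag as ``the main obstacle'' and leave open --- ruling out that $w_0$ approaches the relative boundary of the removed facet as fast as it approaches its affine hull --- is not an optional extra hypothesis in the paper; it is the entire substance of the paper's proof. As written, your proposal therefore has a genuine gap: it proves the easy part and explicitly defers the part that carries the content.

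Concretely, the paper writes $\sin\tau_i=\eta/\|w_0-k_0\|$, where $\eta=H(\mathcal{D}_{s_1}^{1},\mathcal{D}_{s_2}^{2})=\|w_0-v_0\|$ is the height of $w_0$ above $\mathrm{aff}(\mathcal{M}_0)$ and $k_0$ is the nearest point of the relative boundary of $\mathcal{M}_0$, and then obtains exactly the quantitative control you ask for from the convexity of $\Omega$ itself: since $w_0$ and the vertices of $\mathcal{M}_0$ all lie on $\partial\Omega$, one can inscribe through $w_0$ a circular arc of curvature $\gamma_i$ (asserted to depend only on the location of $w_0$ on $\partial\Omega$, not on $\eta$) whose center lies on the line through $w_0$ and $v_0$ and whose endpoints $r_0$ lie on $\mathrm{aff}(\mathcal{M}_0)$ in the interior of $\Omega$. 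The chord--height relation gives $\|v_0-r_0\|=\sqrt{2\eta/\gamma_i-\eta^2}$, and $\|v_0-r_0\|\le\|v_0-k_0\|$ then yields $\tau_i\le\alpha_i\le 1/\sqrt{2/(\eta\gamma_i)-1}\rightarrow 0$; that is, the distance from $w_0$ to the relative boundary of the removed facet is bounded below by a quantity of order $\sqrt{\eta}$, which dominates $\eta$ and excludes the degenerate configurations you describe. (Your skepticism is not baseless --- the argument hinges on $\gamma_i$ remaining bounded as $\eta\to 0$, which the paper secures only by fixing the ``relative location'' of $w_0$ --- but a complete proof must contain some version of this curvature bound, and yours contains none.) The final conversion from a small angle to a small normal difference is done in the paper via $B^2_i=B^1_{p_1}P_i$ with $P_i=I+\tau_i\mathcal{V}_i+o(\tau_i)\in SO(n)$, which is equivalent to your Gram--Schmidt continuity argument once the angle is under control.
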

The proof of  Lemma \ref{l1} can be found in Appendix \ref{a11} In addition, the following lemma describes  the Hausdorff metric between a convex set and its inscribed polyhedron, referring to \cite{2008Approximation}.
\begin{lemma}
	\label{l2}
	For a  convex set $\Omega \subseteq \mathbb{R}^n $, there exists an  inscribed polyhedron $ \mathcal{D}_{s} $ of $\Omega$ such that the upper bound of the Hausdorff metric between $\Omega$ and $ \mathcal{D}_{s} $ satisfies $$ H(\mathcal{D}_{s} , \Omega) \leq \frac{C_{\Omega}}{s^{2/(n-1)}}, $$
	where $ C_{\Omega} $ is a constant related with the curvature of $ \Omega $ and $ s $ is the number of vertices in $ \mathcal{D}_{s} $.
\end{lemma}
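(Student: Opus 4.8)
The plan is to realise $\mathcal{D}_{s}$ as the convex hull of a sufficiently fine finite set of points lying on $\partial\Omega$. Such a polyhedron is automatically inscribed, so $\mathcal{D}_{s}\subseteq\Omega$ and the term $\sup_{z\in\mathcal{D}_{s}}dist(z,\Omega)$ in $H(\mathcal{D}_{s},\Omega)$ vanishes; it then remains only to estimate $\sup_{x\in\Omega}dist(x,\mathcal{D}_{s})$, and since $x\mapsto dist(x,\mathcal{D}_{s})$ is convex, its supremum over the compact convex set $\Omega$ is attained on $\partial\Omega$, so I would only need to bound $dist(x,\mathcal{D}_{s})$ for boundary points $x$. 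The quantitative content is exactly the classical polytopal-approximation theorem of Dudley and of Bronshtein--Ivanov, which is among the results collected in \cite{2008Approximation}; one short route is simply to record that, under our standing assumptions, $\partial\Omega$ is $C^{2}$ with curvature bounded by some $\rho>0$ (this is what ``$C_{\Omega}$ related with the curvature of $\Omega$'' refers to, and it covers e.g.\ the ellipsoidal constraints) and to invoke that theorem directly. For a self-contained argument I would proceed as follows.

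First I would fix $\delta>0$ and take $N_{\delta}=\{w_{1},\dots,w_{m}\}$ a maximal $\delta$-separated subset of $\partial\Omega$, which by maximality is a $\delta$-net of $\partial\Omega$, and set $\mathcal{D}_{s}=\operatorname{conv}N_{\delta}$; its vertices form a subset of $N_{\delta}\subseteq\partial\Omega$, so $\mathcal{D}_{s}$ is an inscribed polyhedron with at most $m$ vertices. Next I would establish the key quadratic estimate: for $x\in\partial\Omega$ with inward unit normal $u$, the curvature bound yields a rolling-ball condition $\mathbf{B}_{1/\rho}(x+u/\rho)\subseteq\Omega$, equivalently, in graph coordinates at $x$, the boundary lies below the paraboloid $\tfrac{\rho}{2}\|\cdot\|^{2}$. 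Hence the cap $\{y\in\Omega:\langle y-x,u\rangle\le h\}$ contains the part of $\partial\Omega$ over a tangential disc of radius of order $\sqrt{h/\rho}$; choosing $h$ of order $\rho\delta^{2}$ makes this radius a fixed multiple of $\delta$, so $N_{\delta}$ contains points of $\partial\Omega$ whose tangential projections surround $x$, and the convex hull of those points — a face region of $\mathcal{D}_{s}$ — contains a point whose tangential part is that of $x$ and whose depth below $x$ is at most $h$, giving $dist(x,\mathcal{D}_{s})\le h=O(\rho\delta^{2})$.

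Then I would count vertices: the balls $\mathbf{B}_{\delta/2}(w_{k})$ are pairwise disjoint, and for $\delta$ small each intersects $\partial\Omega$ in a patch of $(n-1)$-dimensional area at least $c_{n}\delta^{\,n-1}$, so $m\le c_{n}^{-1}A(\partial\Omega)\,\delta^{-(n-1)}$, where $A(\partial\Omega)$ denotes the surface area of $\partial\Omega$. Consequently, for a prescribed $s$, taking $\delta$ of order $\bigl(A(\partial\Omega)/s\bigr)^{1/(n-1)}$ yields an inscribed polyhedron $\mathcal{D}_{s}$ with at most $s$ vertices for which
\[
H(\mathcal{D}_{s},\Omega)=\sup_{x\in\Omega}dist(x,\mathcal{D}_{s})\le O(\rho\delta^{2})\le\frac{C_{\Omega}}{s^{2/(n-1)}},
\]
with $C_{\Omega}$ a dimensional constant times $\rho\,A(\partial\Omega)^{2/(n-1)}$, which depends on the curvature and size of $\Omega$ as asserted.

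The step I expect to be the main obstacle is the quadratic estimate. Passing from a $\delta$-fine net to a $\delta^{2}$-accurate polyhedron is where smoothness genuinely enters: one must show not merely that $\mathcal{D}_{s}$ stays inside $\Omega$ (which is immediate) but that it rises to within $O(\delta^{2})$ of \emph{every} boundary point, and this requires the rolling-ball/cap analysis above to guarantee that enough net points are available around each $x$ to pin down a nearby point of $\mathcal{D}_{s}$; keeping track of the mild distortion between $\partial\Omega$ and its tangent plane inside the cap is the fiddly part. By comparison, the packing bound on $m$ and the reduction to boundary points via convexity of $dist(\cdot,\mathcal{D}_{s})$ are routine.
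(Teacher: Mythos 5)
Your proposal is essentially correct, but note that the paper does not prove this lemma at all: it is imported verbatim from the cited survey \cite{2008Approximation}, i.e.\ it is the classical polytopal-approximation theorem of Dudley and Bronshtein--Ivanov. What you supply is a self-contained proof along the standard lines, and the three ingredients are the right ones: (i) inscribedness kills the $\sup_{z\in\mathcal{D}_s}dist(z,\Omega)$ half of the Hausdorff metric and convexity of $dist(\cdot,\mathcal{D}_s)$ reduces the other half to boundary points; (ii) the rolling-ball/cap estimate converts a $\delta$-net of $\partial\Omega$ into an $O(\rho\delta^2)$-close polyhedron; (iii) the packing bound $m\lesssim A(\partial\Omega)\delta^{-(n-1)}$ turns $\delta$ into $s^{-1/(n-1)}$, giving the exponent $2/(n-1)$. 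The one point to flag is that your argument genuinely uses more than the lemma's stated hypothesis ``convex set'': the rolling-ball inclusion $\mathbf{B}_{1/\rho}(x+u/\rho)\subseteq\Omega$ requires $\partial\Omega$ to be $C^{2}$ with principal curvatures uniformly bounded by $\rho$ (Blaschke's rolling theorem), whereas the Dudley/Bronshtein--Ivanov theorem holds for arbitrary compact convex bodies with $C_\Omega$ depending only on the diameter, via a somewhat different (supporting-hyperplane rather than osculating-ball) argument. Since the paper itself describes $C_\Omega$ as ``related with the curvature of $\Omega$'' and only ever applies the lemma to balls and ellipsoids, this restriction is harmless here, but it should be stated as an explicit hypothesis if your proof is to stand on its own. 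The remaining gaps you identify yourself --- controlling the tangent-plane distortion inside the cap so that the net points' tangential projections surround $x$ --- are real but routine, and do not affect the order of the estimate.
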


		{Based on Lemmas \ref{l1} and  \ref{l2}, it is time to  reveal the  relationship between the			approximate game (\ref{e9})  and the original game (\ref{f1}).}
Note that  the Nash equilibrium $ \boldsymbol{x}^{*} $ of game (\ref{e9}) is the unique solution to $\mathrm{VI}(\boldsymbol{\mathcal{D}}_{s}, F(\boldsymbol{x}))$ by Lemma \ref{p3}.
If the payoff function $ J_{i} $ is fixed, then  different polyhedron approximations  result in different variational inequality solutions. Thereby, we  write $ \boldsymbol{x}^{*} = \boldsymbol{x}^{*}(\boldsymbol{\mathcal{D}}_{s}) $ for game (\ref{e9}). Moreover,  denote   the unique  Nash equilibrium by $  \boldsymbol{x}^{*}(\boldsymbol{\Omega}) $ for  game (\ref{f1}). Then we have the following result.
\begin{mythm}\label{t1}
	Under Assumption 1, the NE of  the approximate game (\ref{e9})  is the $ \epsilon $-NE of the original game (\ref{f1}).
\end{mythm}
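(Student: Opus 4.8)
The plan is to show that the Nash equilibrium $\boldsymbol{x}^{*}(\boldsymbol{\mathcal{D}}_{s})$ of the approximate game satisfies the $\epsilon$-NE inequality \eqref{e6} for the original game, by controlling two sources of error: the gap between the feasible sets $\boldsymbol{\mathcal{D}}_{s}$ and $\boldsymbol{\Omega}$, and the resulting gap between $\boldsymbol{x}^{*}(\boldsymbol{\mathcal{D}}_{s})$ and $\boldsymbol{x}^{*}(\boldsymbol{\Omega})$. First I would write $\boldsymbol{x}^{*}\triangleq\boldsymbol{x}^{*}(\boldsymbol{\mathcal{D}}_{s})$ and, for a fixed player $i$ and an arbitrary $x_{i}\in\Omega_{i}$, take the projection $\hat{x}_{i}\triangleq\Pi_{\mathcal{D}_{s_{i}}^{i}}(x_{i})\in\mathcal{D}_{s_{i}}^{i}$. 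Since $\boldsymbol{x}^{*}$ solves the approximate game, Definition~\ref{d1} applied to game \eqref{e9} gives $J_{i}(x_{i}^{*},\boldsymbol{x}_{-i}^{*})\le J_{i}(\hat{x}_{i},\boldsymbol{x}_{-i}^{*})$. Then I would estimate $J_{i}(\hat{x}_{i},\boldsymbol{x}_{-i}^{*}) - J_{i}(x_{i},\boldsymbol{x}_{-i}^{*})$ using the Lipschitz continuity of $J_{i}$ from Assumption~1: this difference is bounded by $L_{i}\|\hat{x}_{i}-x_{i}\|\le L_{i}\,\mathrm{dist}(x_{i},\mathcal{D}_{s_{i}}^{i})\le L_{i}\,H(\mathcal{D}_{s_{i}}^{i},\Omega_{i})$, which by Lemma~\ref{l2} is at most $L_{i}C_{\Omega_{i}}/s_{i}^{2/(n-1)}$. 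Chaining these gives $J_{i}(x_{i}^{*},\boldsymbol{x}_{-i}^{*})\le J_{i}(x_{i},\boldsymbol{x}_{-i}^{*})+\epsilon$ with $\epsilon$ taken as the maximum over $i$ of the bound above.

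A subtlety is that this only uses the Lipschitz bound in the $x_{i}$ component with $\boldsymbol{x}_{-i}$ held at $\boldsymbol{x}_{-i}^{*}$, so I would make sure the Lipschitz constant invoked is the one for $J_{i}(\cdot,\boldsymbol{x}_{-i})$, which follows from $J_{i}$ being Lipschitz in the full profile $\boldsymbol{x}$. I would also remark, for the subsequent accuracy discussion referenced in the introduction, that the deviation $\|\boldsymbol{x}^{*}(\boldsymbol{\mathcal{D}}_{s})-\boldsymbol{x}^{*}(\boldsymbol{\Omega})\|$ can itself be bounded using strong monotonicity of $F$ (Assumption~1, Lemma~\ref{p3}) together with the variational-inequality characterization: writing both equilibria as VI solutions on $\boldsymbol{\mathcal{D}}_{s}$ and $\boldsymbol{\Omega}$ respectively and using $\kappa$-strong monotonicity plus the projection nonexpansiveness from Lemma~\ref{p1}, one gets a bound proportional to $H(\boldsymbol{\mathcal{D}}_{s},\boldsymbol{\Omega})^{1/2}$ or similar. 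This is not strictly needed for the $\epsilon$-NE statement itself but ties Theorem~\ref{t1} to the quantitative estimates promised later; Lemma~\ref{l1} is what guarantees the normal-vector data of the polyhedron behave continuously as the approximation refines, which matters when the algorithm's projection step is analyzed.

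The main obstacle, and the only place real care is needed, is establishing that $\mathrm{dist}(x_{i},\mathcal{D}_{s_{i}}^{i})$ is genuinely controlled by the Hausdorff metric in the direction we need. Lemma~\ref{l2} bounds $H(\mathcal{D}_{s},\Omega)=\max\{\sup_{x\in\mathcal{D}_{s}}\mathrm{dist}(x,\Omega),\sup_{z\in\Omega}\mathrm{dist}(z,\mathcal{D}_{s})\}$, and it is the second term $\sup_{z\in\Omega}\mathrm{dist}(z,\mathcal{D}_{s})$ that we require; since $\mathcal{D}_{s_{i}}^{i}\subseteq\Omega_{i}$ (an inscribed polyhedron), the first term vanishes and $H(\mathcal{D}_{s_{i}}^{i},\Omega_{i})=\sup_{z\in\Omega_{i}}\mathrm{dist}(z,\mathcal{D}_{s_{i}}^{i})$ directly gives what we want, so the inclusion must be invoked explicitly. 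After that, everything is a short triangle-inequality-plus-Lipschitz computation, and the identification $\epsilon=\max_{i\in\mathcal{I}} L_{i}C_{\Omega_{i}}/s_{i}^{2/(n-1)}>0$ yields the claim; I would also note $\epsilon>0$ strictly unless the sets are already polyhedral, matching Definition~\ref{d2}.
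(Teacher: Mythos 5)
Your proof is correct, but it takes a genuinely different and more elementary route than the paper. The paper never projects the deviation point $x_i$ onto the polyhedron; instead it establishes continuity of the map $\boldsymbol{\mathcal{D}}_{s}\mapsto\boldsymbol{x}^{*}(\boldsymbol{\mathcal{D}}_{s})$ in the Hausdorff metric by interpolating between two polyhedrons one vertex at a time, invoking Lemma \ref{l1} to control the normal-vector data and a sensitivity result for variational inequalities (\cite[Proposition 5.4.1]{facchinei2007finite}, via Lemma \ref{p3}) to conclude $\boldsymbol{x}^{*}(\boldsymbol{\mathcal{D}}_{s})\to\boldsymbol{x}^{*}(\boldsymbol{\Omega})$; it then compares $J_i$ at $\boldsymbol{x}^{*}(\boldsymbol{\mathcal{D}}_{s})$ against the \emph{exact} equilibrium $\boldsymbol{x}^{*}(\boldsymbol{\Omega})$ by a triangle inequality, using $J_{i}(\boldsymbol{x}^{*}(\boldsymbol{\Omega}))-J_{i}(x_i',\boldsymbol{x}^{*}_{-i}(\boldsymbol{\Omega}))\le 0$, arriving at a bound $2\varsigma_i\epsilon$ where $\epsilon$ controls $\|\boldsymbol{x}^{*}(\boldsymbol{\mathcal{D}}_{s})-\boldsymbol{x}^{*}(\boldsymbol{\Omega})\|$. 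Your argument bypasses all of this: it only needs the defining inequality of the NE on $\mathcal{D}_{s_i}^{i}$, Lipschitz continuity of $J_i$, the inclusion $\mathcal{D}_{s_i}^{i}\subseteq\Omega_i$ (which you rightly flag as the point needing explicit justification, and which holds since the polyhedron is the convex hull of boundary points of a convex set), and Lemma \ref{l2}. What your approach buys is an explicit, linear-in-$H$ value $\epsilon=\max_i\varsigma_i H(\mathcal{D}_{s_i}^{i},\Omega_i)$ and independence from strong monotonicity and uniqueness of the original NE; what the paper's approach buys is the continuity statement $\lim_{s\to\infty}\boldsymbol{x}^{*}(\boldsymbol{\mathcal{D}}_{s})=\boldsymbol{x}^{*}(\boldsymbol{\Omega})$ and control of $\|\boldsymbol{x}^{*}(\boldsymbol{\mathcal{D}}_{s})-\boldsymbol{x}^{*}(\boldsymbol{\Omega})\|$, which is reused in Theorem \ref{t2}; your closing remark about bounding this deviation via strong monotonicity of $F$ would recover that ingredient if needed. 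One small caveat: Lemma \ref{l2} is an existence statement (``there exists an inscribed polyhedron such that\ldots''), so the quantitative form $\epsilon\le\max_i\varsigma_i C_{\Omega_i}/s_i^{2/(n-1)}$ presumes the polyhedrons are chosen accordingly; for an arbitrary inscribed polyhedron your bound in terms of $H(\mathcal{D}_{s_i}^{i},\Omega_i)$ is the one that always holds.
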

\begin{proof}
Take
$$
\boldsymbol{\mathcal{D}}_{s_{1}}=\prod_{i=1}^{N} \mathcal{D}_{s_{1, i}}^{i}, \quad \boldsymbol{\mathcal{D}}_{s_{2}}=\prod_{i=1}^{N} \mathcal{D}_{s_{2, i}}^{i}
$$
as two arbitrarily inscribed polyhedrons of $ \boldsymbol{\Omega} $.  Denote $\boldsymbol{\mathcal{D}}_{s_{1}+N}=\prod_{i=1}^{N} \mathcal{D}_{s_{1, i}+1}^{i}$, where vertices  in $ \mathcal{D}_{s_{1, i}+1}^{i} $ consist of all nodes in $ \mathcal{D}_{s_{1, i}}^{i} $ and one different vertex in $ \mathcal{D}_{s_{2, i}}^{i} $ for $ i\in \mathcal{I} $. $ \boldsymbol{\mathcal{D}}_{s_{1}+2N}$, $ \boldsymbol{\mathcal{D}}_{s_{1}+3N}$,  $ \cdots $, $ \boldsymbol{\mathcal{D}}_{s_{1}+s_{2}} $ are denoted in a similar way, where $\boldsymbol{\mathcal{D}}_{s_{1}+s_{2}}=\prod_{i=1}^{N}\mathcal{D}^{i}_{s_{1, i}+s_{2, i}} $ is the profile
of polyhedrons whose vertices consist of all the vertices
in both $ \boldsymbol{\mathcal{D}}_{s_{1}} $ and $ \boldsymbol{\mathcal{D}}_{s_{2}} $.
{Without losing generality, consider $ s_{2, i}\leq s_{2, j}$. If  $ s_{2, i}< s_{2, j}$	and there is no additive point in $ \mathcal{D}_{s_{2, i}}^{i} $ for $ \mathcal{D}^{i}_{s_{1, i}+s_{2, i}} $, then we keep $ \mathcal{D}^{i}_{s_{1, i}+s_{2, i}} $ unchanged  and  increase the vertices in $ \mathcal{D}_{s_{2, j}}^{j} $ successively. Continue this process until  $ \boldsymbol{\mathcal{D}}_{s_{1}+s_{2}} $ is reached.} $ \boldsymbol{\mathcal{D}}_{s_{2}+N}$, $ \boldsymbol{\mathcal{D}}_{s_{2}+2N}$,  $ \cdots $, $ \boldsymbol{\mathcal{D}}_{s_{1}+s_{2}} $ can be defined similarly.

Note that the difference between  $ \| \boldsymbol{x}^{*}\left(\boldsymbol{\mathcal{D}}_{s_{1}}\right)-\boldsymbol{x}^{*}\left(\boldsymbol{\mathcal{D}}_{s_{2}}\right) \|$ can be decomposed into a series of similar structures such as $ \left\|\boldsymbol{x}^{*}\!\left(\boldsymbol{\mathcal{D}}_{s_{1}}\right)\!-\!\boldsymbol{x}^{*}\!\left(\boldsymbol{\mathcal{D}}_{s_{1}+N}\right)\!\right\| $, $ \left\|\boldsymbol{x}^{*}\!\left(\boldsymbol{\mathcal{D}}_{s_{1}+N}\right)\!-\!\boldsymbol{x}^{*}\!\left(\boldsymbol{\mathcal{D}}_{s_{1}+2N}\right)\!\right\| $, and so on. Hence, we only need to investigate  $ \left\|\boldsymbol{x}^{*}\left(\boldsymbol{\mathcal{D}}_{s_{1}}\right)\right.$ $\left.-\boldsymbol{x}^{*}\left(\boldsymbol{\mathcal{D}}_{s_{1}+N}\right)\right\| $. 

Assume that   $  H(\mathcal{D}_{s_{1,i}}^{i},\mathcal{D}_{s_{2,i}}^{i}) \leq \eta $ for  $  i \in \mathcal{I}$ and a positive constant $ \eta$. 	Due to the Hausdorff metric on convex and compact sets, there holds
$$  H(\mathcal{D}_{s_{1,i}}^{i},\mathcal{D}_{s_{1,i}+1}^{i})\leq H(\mathcal{D}_{s_{1,i}}^{i},\mathcal{D}_{s_{2,i}}^{i}) \leq \eta. $$		
By  Lemma \ref{l1},   when
$ H(\mathcal{D}_{s_{1,i}}^{i},\mathcal{D}_{s_{1,i}+1}^{i})\leq \eta$, the $ l $th row of $ B^{1,i} $ and $ B^{(1,{i})+1} $ satisfy
$$
\begin{aligned}
	&\left\|B_{l}^{1, i}-B_{j(l)}^{(1,{i})+1}\right\| \rightarrow0,\quad\mathrm{as}\;\eta\rightarrow0.\\
\end{aligned}
$$	
Correspondingly, $ \left\|b^{1,i}-b^{(1,{i})+1}\right\|\rightarrow0 $ as  $  \eta\rightarrow0 $. Then    $ \boldsymbol{\mathcal{D}}_{s_{1}}\rightarrow \boldsymbol{\mathcal{D}}_{s_{1}+N}$ as  $  \eta\rightarrow0 $. Since $ \operatorname{SOL}(\boldsymbol{\mathcal{D}}_{s}, F(\boldsymbol{x}))$ exists as an isolated solution, by \cite[Proposition 5.4.1]{facchinei2007finite},
$$
\begin{aligned}
	&\operatorname{SOL}\left(\boldsymbol{\mathcal{D}}_{s_{1}}, F(\boldsymbol{x})\right)\rightarrow\operatorname{SOL}\left(\boldsymbol{\mathcal{D}}_{s_{1}+N}, F(\boldsymbol{x})\right) , \quad\mathrm{as}\;\eta\rightarrow0.
\end{aligned}
$$
Therefore, for any $ \epsilon > 0 $, there exists $ \eta > 0 $ such that if  $ H(\mathcal{D}_{s_{1,i}}^{i},\mathcal{D}_{s_{1,i}+1}^{i})<\eta $, then
$$
\begin{aligned}
	&\left\|\boldsymbol{x}^{*}\left(\boldsymbol{\mathcal{D}}_{s_{1}}\right)-\boldsymbol{x}^{*}\left(\boldsymbol{\mathcal{D}}_{s_{1}+N}\right)\right\| \\
	= &\left\|\operatorname{SOL}\left(\boldsymbol{\mathcal{D}}_{s_{1}}, F(\boldsymbol{x})\right)-\operatorname{SOL}\left(\boldsymbol{\mathcal{D}}_{s_{1}+N}, F(\boldsymbol{x})\right)\right\| \\
	\leq & \epsilon.
\end{aligned}
$$		
Similarly, 
$$
\begin{aligned}		
	&\left\|\boldsymbol{x}^{*}\left(\boldsymbol{\mathcal{D}}_{s_{1}}\right)-\boldsymbol{x}^{*}\left(\boldsymbol{\mathcal{D}}_{s_{2}}\right)\right\| \\
	\leq  &  \left\|\boldsymbol{x}^{*}\left(\boldsymbol{\mathcal{D}}_{s_{1}}\right)-\boldsymbol{x}^{*}\left(\boldsymbol{\mathcal{D}}_{s_{1}+N}\right)\right\|\\
	& +\cdots+\left\|\boldsymbol{x}^{*}\left(\boldsymbol{\mathcal{D}}_{s_{1}+s_{2}-N}\right)-\boldsymbol{x}^{*}\left(\boldsymbol{\mathcal{D}}_{s_{1}+s_{2}}\right)\right\|\\
	&+\left\|\boldsymbol{x}^{*}\left(\boldsymbol{\mathcal{D}}_{s_{2}}\right)-\boldsymbol{x}^{*}\left(\boldsymbol{\mathcal{D}}_{s_{2}+N}\right)\right\| \\
	&+\cdots+\left\|\boldsymbol{x}^{*}\left(\boldsymbol{\mathcal{D}}_{s_{1}+s_{2}-N}\right)-\boldsymbol{x}^{*}\left(\boldsymbol{\mathcal{D}}_{s_{1}+s_{2}}\right)\right\|\\
	\leq  & s\epsilon,		
\end{aligned}
$$		
which means that $  \boldsymbol{x}^{*}(\boldsymbol{\mathcal{D}}_{s}) $ is continuous in $ \boldsymbol{\mathcal{D}}_{s} $ under the
Hausdorff metric. Moreover,   by
$\lim _{s \rightarrow \infty} H\left(\boldsymbol{\mathcal{D}}_{s_{1}},\boldsymbol{\Omega}\right) =0$ in  Lemma \ref{l2}, we have
$$
\lim \limits_{s \rightarrow \infty} \boldsymbol{x}^{*}\left(\boldsymbol{\mathcal{D}}_{s}\right)=\boldsymbol{x}^{*}(\boldsymbol{\Omega}),
$$	
where $ \boldsymbol{x}^{*}(\boldsymbol{\Omega}) $ is the Nash equilibrium of game (\ref{f1}).

Finally, we analyze the difference  between $  J_{i}(\boldsymbol{x}^{*}(\boldsymbol{\mathcal{D}}_{s}))$ and $J_{i}(x_{i}^{\prime},\boldsymbol{x}^{*}_{-i}(\boldsymbol{\mathcal{D}}_{s})) $,  where the $ i $th player’s equilibrium strategy is $ x^{*}_{i}(\boldsymbol{\mathcal{D}}_{s}) $   with  respect to $ \boldsymbol{\mathcal{D}}_{s} $ and $ x^{\prime}_{i} $ is arbitrarily chosen from $ \Omega_{i} $, while other players’ strategies remain the same $ \boldsymbol{x}^{*}_{-i}(\boldsymbol{\mathcal{D}}_{s}) $.
When  $H\left(D_{s_{i}}^{i}, \Omega_{i} \right)  \leq \eta$ for $ i\in \mathcal{I} $,
$$
\begin{aligned}
	&J_{i}\left(\boldsymbol{x}^{*}\left(\boldsymbol{\mathcal{D}}_{s}\right)\right)-J_{i}\left(x_{i}^{\prime}, \boldsymbol{x}_{-i}^{*}\left(\boldsymbol{\mathcal{D}}_{s}\right)\right) \\
	\leq &\left\|J_{i}\left(x_{i}^{\prime}, \boldsymbol{x}_{-i}^{*}(\boldsymbol{\Omega})\right)-J_{i}\left(x_{i}^{\prime}, \boldsymbol{x}_{-i}^{*}\left(\boldsymbol{\mathcal{D}}_{s}\right)\right)\right\|\\
	&+\left\|J_{i}\left(\boldsymbol{x}^{*}\left(\boldsymbol{\mathcal{D}}_{s}\right)\right)-J_{i}\left(\boldsymbol{x}^{*}(\boldsymbol{\Omega})\right)\right\|\\ &+J_{i}\left(\boldsymbol{x}^{*}(\boldsymbol{\Omega})\right)-J_{i}\left(x_{i}^{\prime}, \boldsymbol{x}_{-i}^{*}(\boldsymbol{\Omega})\right). \\
	\leq & \varsigma_{i}\left\|\boldsymbol{x}^{*}\left(\boldsymbol{\mathcal{D}}_{s}\right)-\boldsymbol{x}^{*}(\boldsymbol{\Omega})\right\|+\varsigma_{i}\left\|\boldsymbol{x}_{-i}^{*}(\boldsymbol{\Omega})-\boldsymbol{x}_{-i}^{*}\left(\boldsymbol{\mathcal{D}}_{s}\right)\right\|+0 \\
	\leq & 2\varsigma_{i}\epsilon,
\end{aligned}
$$		
where  $ \varsigma_{i} $ is the Lipschitz constant of $ J_{i} $. This completes the proof.
\end{proof}

Theorem \ref{t1},  based on convex set geometry and metric spaces, transforms the considered game into a variational problem. The accuracy of  $ \epsilon $-NE is influenced by several factors, specifically, the   vertices number of the approximate inscribed polyhedrons, the Lipschitz constants of payoff functions $ J_{i}(\boldsymbol{x}) $ for $ i \in \mathcal{I} $, and geometric structures of convex sets $ \Omega_{i} $ (referring to the constant $ C_{\Omega} $ with $\Omega= \Omega_{i} $ in Lemma \ref{l2}).
Obviously, when  constructing polyhedrons with more vertices, {we obtain more  hyperplanes  enclosed  the polyhedrons} (more rows of matrix $ B^{i} $ and vectors $ b^{i} $), which results in lower $ H\left(\mathcal{D}_{s_{i}}^{i}, \Omega_{i} \right) $ (referring to $ \mathcal{D}_{s}=\mathcal{D}_{s_{i}}^{i} $ and  $\Omega= \Omega_{i} $ in Lemma \ref{l2}) and   higher accuracy of $ \epsilon $. This conforms with the  intuition.

Actually,  there have been  methods on how to construct a proper inscribed polyhedron
such that
its vertices or faces are approximate to the convex set in
the best way. In other words, the Hausdorff
metric between  the convex set and the inscribed polyhedron can satisfy Lemma \ref{l2}.  Briefly, we introduce some methods for constructing an approximation polyhedron.

When the vertices or faces are constructed successively,
we can design  iterative algorithms  to find the best inscribed polyhedron. 
The main idea of iterative algorithms is to construct a polyhedron $\mathcal{D}^{k+1}=\operatorname{conv}\left(\mathcal{D}^{k} \cup\left\{w_{k+1}\right\}\right)$ every iteration, where $ w_{k+1} $ is a point   from $\partial \Omega $
(i.e., the boundary of $ \Omega $).	

One of the methods of constructing point $ w_{k+1} $
is described as follows.	For $ u\in \mathbb{R}^{n} $, denote $g_{\Omega}(u)=\max \{\langle u, x\rangle: x \in \Omega\}$ as the support function of $ \Omega $ on the unit sphere of directions $S^{n-1}=\left\{u \in \mathbb{R}^{n}:\|u\|=1\right\}$. 	The additional point $  w_{k+1} \in\partial \Omega $
belongs to the support plane parallel to the hyperplane in $  \mathcal{D}^{k} $, for which the quantity $ g_{\Omega}(u)- g_{\mathcal{D}^{k}}(u)$  attains its maximum on the set of external normals $  u \in S^{n-1} $ to the hyperplanes of $ \mathcal{D}^{k} $ \cite{2008Approximation}. Meanwhile, the initial polyhedron could be constructed by  the method \cite{bushenkov1985iteration}.	

Additionally,  the efficiency of the algorithm in the class of ellipsoids was described in \cite{dzholdybaeva1992numerical}.
	For sets with twice differentiable boundaries and positive curvatures,   the improved  approximation algorithms were proposed in \cite{2008Approximation,dzholdybaeva1992numerical}. For sets with nonsmooth boundaries, the  convergence velocity of algorithms was obtained in  \cite{Kamenev1993The}. 
Since the set constraint of each player is private information to itself, different players can approximate their feasible sets through different construction methods separately, in advance and offline. 
Therefore,  the computational cost and complexity of constructing vertices or faces of inscribed polyhedrons do not affect the  computational efficiency of the distributed algorithm essentially.
%
%

	\section{Distributed algorithm}\label{s4}
In this section, we  propose a distributed   algorithm for  the approximate game (\ref{e9})  and  investigate its  convergence performance.

In fact, each player has its own choices for  approximation,  with  local objective function $ J_{i}(x_{i},\boldsymbol{x}_{-i}) $, local approximate  set constraints $ \mathcal{D}_{s_{i}}^{i} $, $ B^{i} $, and $ b^{i} $ is  private knowledge of player $ i $. In  multi-agent frameworks, it is considered that player $ i $ can communicate with its neighbors through a network. Then 
we propose Algorithm 1 for seeking the $ \epsilon $-NE.

Let $ \beta_{1} $, $ \beta_{2} >0$ be some constants satisfying
\begin{subequations}\label{12}
	\begin{align}
		&0<\beta_{1}<\frac{2 \kappa}{c^{2}},\\
		&\beta_{2}>\frac{2 c_{2} \cdot c_{3}(2+\beta_{1} \cdot \kappa+2 \beta_{1} \cdot c)}{\lambda\left(2 \kappa-\beta_{1} \cdot c^{2}\right)} ,
	\end{align}
\end{subequations}
where $c \triangleq c_{1}+c_{2}\cdot c_{3}$, and $
\lambda $ is the smallest positive eigenvalue of $ \frac{1}{2}(L + L^{\mathrm{T}})  $ ($ L $ is the Laplacian matrix). Actually, the information of the eigenvalue $
\lambda $ can be obtained  by a distributed
method given in  \cite{cherukuri2016initialization} in advance. Thus, under Assumption 1, the  value of $
\lambda $, the strongly monotone parameter $\kappa  $,   and Lipschitz constants guarantee that the appropriate values of $ \beta_{1} $ and
$ \beta_{2} $ can always be obtained.

Since the $ i $th player's local feasible set $ \Omega_{i} $ is approximated by  inscribed polyhedron $ \mathcal{D}_{s_{i}}^{i} $ offline, the algorithm  contains a subproblem for solving a standard quadratic programming problem $ \operatorname{QP}(x_{i},\zeta_{i}) $ at each step  \cite{2010The},  defined as
\begin{equation}\label{q1}
	\min _{y}\left\|\left(x_{i}-\beta_{1} U_{i}\left(x_{i}, \zeta_{i}\right)\right)-y\right\|^{2}, \text { s.t. } B^{i} y \leq b^{i},
\end{equation}
where $U_{i}  $ was defined in (\ref{aaa}), $ B^{i} y \leq b^{i} $ is  equivalent to $ y\in \mathcal{D}_{s_{i}}^{i}$ in (\ref{bbb}). Denote $ \operatorname{SOL-QP}(x_{i},\zeta_{i}) $ as the solution to the  QP problem (\ref{q1}).  Thus, the distributed approximate  algorithm to solve  game (\ref{e9}) is designed as follows.

\begin{algorithm}[H]
	\renewcommand{\thealgorithm}{1}
	\caption{for each  $ i \in \mathcal{I}  $}
	\label{a1}
	\vspace{0.1cm}
	
	\textbf{Initialization}:
	\vspace{-0.2cm}
	\begin{flalign*}
		& x_{i}(0),y_{i}(0)\in \mathcal{D}_{s_{i}}^{i},\; \phi_{i}(0)=\boldsymbol{0}_{M},\;\zeta_{i}(0)=q_{i}(x_{i}(0)).
		&
	\end{flalign*}
	\vspace{-0.6cm}
	
	\textbf{Dynamics renewal}:
	\vspace{-0.2cm}
	\begin{flalign*}
		&\dot{x}_{i}=y_{i}-x_{i}, \\
		&\dot{\phi_{i}}=\beta_{2}\sum_{j =1}^{N}a_{i j}\left(\zeta_{j}-\zeta_{i}\right),\\
		&\zeta_{i}=\phi_{i}+q_{i}\left(x_{i}\right),\\
		&	y_{i}=\operatorname{SOL-QP}(x_{i},\zeta_{i}),
		&	 	
	\end{flalign*}
	where $ a_{ij} $ is the $ (i,j) $th element of the adjacency matrix.
\end{algorithm}
In  Algorithm 1, the $ i $th player calculates the local decision variable $ x_{i} \in \mathcal{D}_{s_{i}}^{i}$
based on projected gradient play dynamics by solving a $ \operatorname{QP}(x_{i},\zeta_{i}) $  problem at each step. The local variable $ \zeta_{i}$  is to estimate  the global aggregation $ \mathcal{Q}(\boldsymbol{x}) $. 
The design idea is improved based on \cite{ye2016game,liang2019distributed}, in which the projection in our algorithm is obtained with quadratic programming, thus improving the computational efficiency.
\begin{remark}\label{444}
	Quadratic programming in Algorithm 1 ensures that the projection is solvable in polynomial time, even with a large number of linear inequality constraints, while the general nonlinear programming corresponding to the high-dimensional nonlinear constraints cannot guarantee this \cite{vavasis1991nonlinear}.  		
	For example,
	the computational  cost of the projection on ellipsoid constraints is $ O (n^4) $ \cite{ye2014new}, whereas it is $ O (n^{2.5}) $ on linear constraints caused by approximation \cite{monteiro1989interior}, especially $ O (n) $ if linear constraints are generalized bounded constraints \cite{calamai1987projected}. More details about the computational cost saved by approximation  are explained  by numerical experiments in Section \ref{s6}.
\end{remark}

A compact form of Algorithm 1 can be written as
\begin{equation}\label{e10}
	\left\{\begin{array}{ll}
		\dot{\boldsymbol{x}}=\boldsymbol{y}-\boldsymbol{x}, & \boldsymbol{x}(0) \in \boldsymbol{\mathcal{D}}_{s}, \\
		\dot{\boldsymbol{\zeta}}=-\beta_{2} L \otimes I_{M} \boldsymbol{\zeta}+\frac{d}{d t} \boldsymbol{q}(\boldsymbol{x}), & \boldsymbol{\zeta}(0)=\boldsymbol{q}(\boldsymbol{x}(0)),
	\end{array}\right.
\end{equation}
where    $ \boldsymbol{\zeta}= col(\zeta_{1},\!\cdots\!, \zeta_{N})$, $ \boldsymbol{q}(\boldsymbol{x})\!=\! col(q_{1}(x_{1}\!),\!\cdots\!, q_{N}(x_{N})\!)$, $ \boldsymbol{y}= col(y_{1},\cdots, y_{N})$ with $ y_{i}=\operatorname{SOL-QP}(x_{i},\zeta_{i}) $ basically.

Then we first verify the equivalency between the equilibrium of dynamics (\ref{e10}) and the
Nash equilibrium  $ 	\boldsymbol{x}^{*}\left(\boldsymbol{\mathcal{D}}_{s}\right)  $   of (\ref{e9}), whose   proof  is straightforward by Lemma 1 and Lemma 2.

\begin{lemma}\label{l10}
	Under Assumption 1, the  equilibrium of  (\ref{e10})  is
	\begin{equation}\label{e27}
		\left[\begin{array}{l}
			\boldsymbol{x} \\
			\boldsymbol{\zeta}
		\end{array}\right]=\left[\begin{array}{l}
			\boldsymbol{x}^{*}\left(\boldsymbol{\mathcal{D}}_{s}\right) \\
			\boldsymbol{\zeta}^{*}\left(\boldsymbol{\mathcal{D}}_{s}\right)
		\end{array}\right]=\left[\begin{array}{c}
			\boldsymbol{x}^{*}\left(\boldsymbol{\mathcal{D}}_{s}\right) \\
			\boldsymbol{1}_{N} \otimes \mathcal{Q}\left(\boldsymbol{x}^{*}\left(\boldsymbol{\mathcal{D}}_{s}\right)\right)
		\end{array}\right],
	\end{equation}
	where $ 	\boldsymbol{x}^{*}\left(\boldsymbol{\mathcal{D}}_{s}\right)  $ is  the NE of approximate game (\ref{e9}).
\end{lemma}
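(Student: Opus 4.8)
The plan is to establish the claimed equivalence in two directions. First I would note that since Algorithm~1 is written in the compact form (\ref{e10}), the equilibrium $(\boldsymbol{x},\boldsymbol{\zeta})$ is characterized by $\dot{\boldsymbol{x}}=0$ and $\dot{\boldsymbol{\zeta}}=0$. From $\dot{\boldsymbol{x}}=\boldsymbol{y}-\boldsymbol{x}=0$ we get $\boldsymbol{x}=\boldsymbol{y}$, i.e.\ $x_i = \operatorname{SOL-QP}(x_i,\zeta_i)$ for every $i$. By the definition of the QP subproblem (\ref{q1}), $\operatorname{SOL-QP}(x_i,\zeta_i)=\Pi_{\mathcal{D}^i_{s_i}}\bigl(x_i-\beta_1 U_i(x_i,\zeta_i)\bigr)$, so the equilibrium condition reads $x_i=\Pi_{\mathcal{D}^i_{s_i}}(x_i-\beta_1 U_i(x_i,\zeta_i))$ for all $i$, which in product form is $\boldsymbol{x}=\Pi_{\boldsymbol{\mathcal{D}}_s}(\boldsymbol{x}-\beta_1 U(\boldsymbol{x},\boldsymbol{\zeta}))$.

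Next I would analyze $\dot{\boldsymbol{\zeta}}=0$. At equilibrium $\frac{d}{dt}\boldsymbol{q}(\boldsymbol{x})=0$ since $\dot{\boldsymbol{x}}=0$, so the second equation collapses to $(L\otimes I_M)\boldsymbol{\zeta}=0$. Because $\mathcal{G}$ is strongly connected (Assumption~1), Lemma~\ref{p4}(2) gives that zero is a simple eigenvalue of $L$, so $\ker(L\otimes I_M)=\{\boldsymbol{1}_N\otimes v : v\in\mathbb{R}^M\}$, i.e.\ all the local estimates are equal: $\zeta_i=v$ for some common $v$. To identify $v$, I would use the conservation property of the estimator: differentiating $\sum_i \zeta_i$ along (\ref{e10}), the Laplacian term vanishes because $\mathcal{G}$ is weight-balanced (Lemma~\ref{p4}(1), or directly $\boldsymbol{1}^\mathrm{T}L=0$), so $\sum_i\zeta_i(t)=\sum_i\zeta_i(0)=\sum_i q_i(x_i(0))$ is invariant — but this needs care. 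Cleaner: from $\zeta_i=\phi_i+q_i(x_i)$ in Algorithm~1 and $\sum_i\dot\phi_i=\beta_2\sum_i\sum_j a_{ij}(\zeta_j-\zeta_i)=0$ by weight-balancedness, we have $\sum_i(\zeta_i-q_i(x_i))=\sum_i\phi_i$ constant $=\sum_i\phi_i(0)=0$. Hence at equilibrium $Nv=\sum_i\zeta_i=\sum_i q_i(x_i)=N\mathcal{Q}(\boldsymbol{x})$, so $v=\mathcal{Q}(\boldsymbol{x})$ and $\boldsymbol{\zeta}=\boldsymbol{1}_N\otimes\mathcal{Q}(\boldsymbol{x})$.

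Combining the two, at equilibrium $U(\boldsymbol{x},\boldsymbol{\zeta})=U(\boldsymbol{x},\boldsymbol{1}_N\otimes\mathcal{Q}(\boldsymbol{x}))=F(\boldsymbol{x})$ by the identity noted after the definition of $U$, so $\boldsymbol{x}=\Pi_{\boldsymbol{\mathcal{D}}_s}(\boldsymbol{x}-\beta_1 F(\boldsymbol{x}))$. By Lemma~\ref{p1}(1) this is exactly $\boldsymbol{x}\in\operatorname{SOL}(\boldsymbol{\mathcal{D}}_s,F)$, and by Lemma~\ref{p3} applied to the approximate game (with feasible set $\boldsymbol{\mathcal{D}}_s$ in place of $\boldsymbol{\Omega}$) together with strong monotonicity of $F$ (Lemma~\ref{p1}(3)), this solution is unique and equals the NE $\boldsymbol{x}^*(\boldsymbol{\mathcal{D}}_s)$ of (\ref{e9}). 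The converse direction is immediate by reading the same equivalences backwards: plugging $\boldsymbol{x}=\boldsymbol{x}^*(\boldsymbol{\mathcal{D}}_s)$ and $\boldsymbol{\zeta}=\boldsymbol{1}_N\otimes\mathcal{Q}(\boldsymbol{x}^*(\boldsymbol{\mathcal{D}}_s))$ into (\ref{e10}) gives $\dot{\boldsymbol{\zeta}}=0$ and $\dot{\boldsymbol{x}}=0$.

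I expect the only genuinely delicate point to be pinning down the constant common value of $\boldsymbol{\zeta}$ — i.e.\ justifying that the invariant quantity $\sum_i\phi_i$ (equivalently $\sum_i(\zeta_i-q_i(x_i))$) stays at its zero initial value, which relies squarely on the weight-balanced assumption so that $\boldsymbol{1}_N^\mathrm{T}L=\boldsymbol{0}$. Everything else is a direct chain of the already-established lemmas (Lemmas~\ref{p1}, \ref{p3}, \ref{p4}) and the algebraic identity $U(\boldsymbol{x},\boldsymbol{1}_N\otimes\mathcal{Q}(\boldsymbol{x}))=F(\boldsymbol{x})$, so as the paper says the proof is essentially straightforward once this invariance is observed.
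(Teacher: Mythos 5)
Your proposal is correct and follows essentially the same route as the paper, which only sketches the argument (consensus of the $\zeta_i$ from strong connectivity, identification of the common value via the weight-balanced invariance $\sum_i\phi_i\equiv 0$, and then Lemmas \ref{p1} and \ref{p3} to identify the fixed point of the projected dynamics with the NE of (\ref{e9})). Your write-up simply fills in the details the paper leaves implicit, including the one genuinely delicate point — that $\boldsymbol{1}_N^{\mathrm{T}}L=\boldsymbol{0}$ pins the consensus value to $\mathcal{Q}(\boldsymbol{x})$ — so nothing further is needed.
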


From Lemma 2,  the strong connectivity and weight balance of graph 	
$ \mathcal{G} $  guarantee $\zeta_{1}=\zeta_{2}=\cdots=\zeta_{N}$, and $ \frac{1}{N}\sum_{i=1}^{N}\zeta_{i}=\mathcal{Q}(\boldsymbol{x}) $.  Together with Lemma 1, the point given in (\ref{e27}) is
the equilibrium of (\ref{e10}). Moreover, by Lemma \ref{l10}, the convergence of Algorithm 1 is discussed in the following lemma, by easily extending \cite[Theorem 2]{liang2019distributed}.
%
\begin{lemma}\label{l11}
	Under Assumption 1, the  algorithm (\ref{e10})  converges  at an exponential  rate. Moreover, $ \boldsymbol{x} $ in (\ref{e10}) exponentially converges to
	the NE of  (\ref{e9}).
\end{lemma}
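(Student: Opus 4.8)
The plan is to run the Lyapunov argument of \cite[Theorem 2]{liang2019distributed}, adapted to the fact that the projection step is now implemented through the quadratic program $\operatorname{QP}(x_i,\zeta_i)$ on the polyhedron $\mathcal{D}^i_{s_i}$; by Lemma \ref{p1}(1) the subproblem solution satisfies $y_i=\operatorname{SOL-QP}(x_i,\zeta_i)=\Pi_{\mathcal{D}^i_{s_i}}\!\big(x_i-\beta_1 U_i(x_i,\zeta_i)\big)$, so all projection-based estimates used in \cite{liang2019distributed} carry over. First I would translate the equilibrium (\ref{e27}) to the origin by setting $\tilde{\boldsymbol{x}}=\boldsymbol{x}-\boldsymbol{x}^*(\boldsymbol{\mathcal{D}}_s)$ and $\tilde{\boldsymbol{\zeta}}=\boldsymbol{\zeta}-\boldsymbol{1}_N\otimes\mathcal{Q}(\boldsymbol{x}^*(\boldsymbol{\mathcal{D}}_s))$, and decompose $\tilde{\boldsymbol{\zeta}}$ into its consensus component in $\operatorname{span}(\boldsymbol{1}_N\otimes I_M)$ and the orthogonal disagreement component $\boldsymbol{\zeta}_{\perp}$. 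Since $\mathcal{G}$ is weight-balanced, $\boldsymbol{1}_N^{\mathrm{T}}L=0$, so along (\ref{e10}) one has $\frac{d}{dt}\sum_i\zeta_i=\frac{d}{dt}\sum_i q_i(x_i)$; with the initialization $\zeta_i(0)=q_i(x_i(0))$ this keeps $\frac1N\sum_i\zeta_i=\mathcal{Q}(\boldsymbol{x})$ invariant, hence the consensus part of $\tilde{\boldsymbol{\zeta}}$ equals $\boldsymbol{1}_N\otimes(\mathcal{Q}(\boldsymbol{x})-\mathcal{Q}(\boldsymbol{x}^*(\boldsymbol{\mathcal{D}}_s)))$ and is bounded by $c_3\|\tilde{\boldsymbol{x}}\|$ through the $c_3$-Lipschitz continuity of the $q_i$. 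Thus it suffices to prove exponential decay of $(\tilde{\boldsymbol{x}},\boldsymbol{\zeta}_{\perp})$.

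Next I would take a Lyapunov candidate $V=\tfrac12\|\tilde{\boldsymbol{x}}\|^2+\tfrac{\alpha}{2}\|\boldsymbol{\zeta}_{\perp}\|^2$ with $\alpha>0$ chosen to match the constants in (\ref{12}), and differentiate along (\ref{e10}). For the $\tilde{\boldsymbol{x}}$-block, use $\dot{\boldsymbol{x}}=\boldsymbol{y}-\boldsymbol{x}$ together with the non-expansiveness of $\Pi_{\mathcal{D}^i_{s_i}}$, the fixed-point identity $x_i^*=\Pi_{\mathcal{D}^i_{s_i}}(x_i^*-\beta_1 U_i(x_i^*,\zeta_i^*))$ at equilibrium, the $\kappa$-strong monotonicity of $F$, and the bounds $\|U(\boldsymbol{x},\boldsymbol{\zeta})-F(\boldsymbol{x})\|=\|U(\boldsymbol{x},\boldsymbol{\zeta})-U(\boldsymbol{x},\boldsymbol{1}_N\otimes\mathcal{Q}(\boldsymbol{x}))\|\le c_2\|\boldsymbol{\zeta}_{\perp}\|$ and $\|U(\boldsymbol{x},\boldsymbol{\zeta})-U(\boldsymbol{x}^*,\boldsymbol{\zeta}^*)\|\le c_1\|\tilde{\boldsymbol{x}}\|+c_2\|\tilde{\boldsymbol{\zeta}}\|$; collecting terms yields a contribution of order $-\tfrac12(2\kappa-\beta_1 c^2)\|\tilde{\boldsymbol{x}}\|^2$ plus cross terms in $\|\tilde{\boldsymbol{x}}\|\,\|\boldsymbol{\zeta}_{\perp}\|$, where $c=c_1+c_2c_3$. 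For the $\boldsymbol{\zeta}_{\perp}$-block, use $\dot{\boldsymbol{\zeta}}=-\beta_2(L\otimes I_M)\boldsymbol{\zeta}+\frac{d}{dt}\boldsymbol{q}(\boldsymbol{x})$, the inequality $\boldsymbol{\zeta}_{\perp}^{\mathrm{T}}(L\otimes I_M)\boldsymbol{\zeta}_{\perp}\ge\lambda\|\boldsymbol{\zeta}_{\perp}\|^2$ from Lemma \ref{p4}, and $\|\frac{d}{dt}\boldsymbol{q}(\boldsymbol{x})\|\le c_3\|\boldsymbol{y}-\boldsymbol{x}\|$, itself bounded via the projection identity by a multiple of $\|\tilde{\boldsymbol{x}}\|+\beta_1(c_1\|\tilde{\boldsymbol{x}}\|+c_2\|\boldsymbol{\zeta}_{\perp}\|)$; this produces $-\lambda\beta_2\|\boldsymbol{\zeta}_{\perp}\|^2$ plus matching cross terms.

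Then I would assemble $\dot V\le -z^{\mathrm{T}}Mz$ with $z=(\|\tilde{\boldsymbol{x}}\|,\|\boldsymbol{\zeta}_{\perp}\|)^{\mathrm{T}}$ and a symmetric $2\times2$ matrix $M$ whose entries depend only on $\kappa,\lambda,\beta_1,\beta_2,c_1,c_2,c_3$: the first inequality in (\ref{12}), $\beta_1<2\kappa/c^2$, makes the $(1,1)$-entry positive, and the second inequality in (\ref{12}) is precisely the requirement that the Schur complement (equivalently $\det M$) be positive, so $M\succ0$. Positive definiteness of $M$ and of the quadratic form defining $V$ then give $\dot V\le-\mu V$ for some $\mu>0$, hence $\|\tilde{\boldsymbol{x}}(t)\|$ and $\|\boldsymbol{\zeta}_{\perp}(t)\|$ decay exponentially; adding back the consensus part (bounded by $c_3\|\tilde{\boldsymbol{x}}\|$) shows $(\boldsymbol{x},\boldsymbol{\zeta})\to(\boldsymbol{x}^*(\boldsymbol{\mathcal{D}}_s),\boldsymbol{1}_N\otimes\mathcal{Q}(\boldsymbol{x}^*(\boldsymbol{\mathcal{D}}_s)))$ exponentially, which by Lemma \ref{l10} is the NE of the approximate game (\ref{e9}). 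The main obstacle is bookkeeping the cross terms with constants sharp enough that (\ref{12}) genuinely certifies $M\succ0$ — in particular handling the feedback term $\frac{d}{dt}\boldsymbol{q}(\boldsymbol{x})$, which re-injects the projected $U_i$-dynamics into the $\boldsymbol{\zeta}$-equation; the projection non-expansiveness, strong monotonicity, and Laplacian eigenvalue bound are all routine, which is why the extension from \cite{liang2019distributed} amounts only to re-checking these estimates for the polyhedral projection.
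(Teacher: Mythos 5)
Your proposal is correct and follows essentially the same route the paper intends: the paper gives no explicit proof of Lemma \ref{l11}, stating only that it follows "by easily extending \cite[Theorem 2]{liang2019distributed}," and your Lyapunov construction (consensus/disagreement splitting of $\boldsymbol{\zeta}$, the invariance $\frac1N\sum_i\zeta_i=\mathcal{Q}(\boldsymbol{x})$ from weight-balancedness and the initialization, and the $2\times2$ matrix condition certified by (\ref{12})) is precisely that extension, with the only new ingredient being the observation that $\operatorname{SOL-QP}(x_i,\zeta_i)=\Pi_{\mathcal{D}^i_{s_i}}(x_i-\beta_1U_i(x_i,\zeta_i))$ so the polyhedral projection inherits non-expansiveness.
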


Furthermore, from Lemma  \ref{l10},
take
$$
\boldsymbol{\sigma}\triangleq\boldsymbol{\zeta} -\mathbf{1}_{N} \otimes \mathcal{Q}(\boldsymbol{x}).
$$
The distributed
algorithm (\ref{e10})  of the approximate game (\ref{e9}) can be written via a general distributed projected gradient dynamics, by  $L \mathbf{1}_{N}=\mathbf{0}_{N}$ and (\ref{e10}), as follows:
\begin{subequations}\label{e17}
	\begin{align}
		\dot{\boldsymbol{x}}&=\Pi_{\boldsymbol{\mathcal{D}}_{s}}(\boldsymbol{x}-\beta_{1} U(\boldsymbol{x},\mathbf{1}_{N} \otimes \mathcal{Q}(\boldsymbol{x})+\boldsymbol{\sigma} ))-\boldsymbol{x},\\
		\dot{\boldsymbol{\sigma}}&=-\beta_{2} L \otimes I_{M} \boldsymbol{\sigma}+\frac{d}{d t}\left(\boldsymbol{q}(\boldsymbol{x})-\mathbf{1}_{N} \otimes \mathcal{Q}(\boldsymbol{x})\right) \\\notag
		&=-\beta_{2} L \otimes I_{M} \boldsymbol{\sigma}+\left(\nabla \boldsymbol{q}(\boldsymbol{x})-\mathbf{1}_{N} \otimes \nabla \mathcal{Q}(\boldsymbol{x})\right)^{\mathrm{T}}\cdot \\\notag
		&\quad\;\left(\Pi_{\boldsymbol{\mathcal{D}}_{s}}\left(\boldsymbol{x}-\beta_{1} U\left(\boldsymbol{x}, \mathbf{1}_{N} \otimes \mathcal{Q}(\boldsymbol{x})+\boldsymbol{\sigma}\right)\right)-\boldsymbol{x}\right),\notag
	\end{align}
\end{subequations}
where $ \boldsymbol{x}(0) \in \boldsymbol{\mathcal{D}}_{s} $ and $ \boldsymbol{\sigma}(0)=\boldsymbol{q}(\boldsymbol{x}(0))-\mathbf{1}_{N} \otimes \mathcal{Q}(\boldsymbol{x}(0)) $.

Analogously, the distributed algorithm for original game (\ref{f1}) (without any approximation) can be written as
\begin{equation}\label{11}
	\begin{array}{l}
		\dot{\boldsymbol{x}}=\Pi_{\boldsymbol{\Omega}}(\boldsymbol{x}-\beta_{1} U(\boldsymbol{x}, \mathbf{1}_{N} \otimes \mathcal{Q}(\boldsymbol{x})+\boldsymbol{\sigma}))-\boldsymbol{x},\\
		\dot{\boldsymbol{\sigma}}=-\beta_{2} L \otimes I_{M} \boldsymbol{\sigma}+\left(\nabla \boldsymbol{q}(\boldsymbol{x})-\mathbf{1}_{N} \otimes \nabla \mathcal{Q}(\boldsymbol{x})\right)^{\mathrm{T}}\cdot \\
		\quad\quad\left(\Pi_{\boldsymbol{\Omega}}\left(\boldsymbol{x}-\beta_{1} U\left(\boldsymbol{x}, \mathbf{1}_{N} \otimes \mathcal{Q}(\boldsymbol{x})+\boldsymbol{\sigma}\right)\right)-\boldsymbol{x}\right),
	\end{array}
\end{equation}
where $ \boldsymbol{x}(0) \in \boldsymbol{\Omega} $ and $ \boldsymbol{\sigma}(0)=\boldsymbol{q}(\boldsymbol{x}(0))-\mathbf{1}_{N} \otimes \mathcal{Q}(\boldsymbol{x}(0))$.

For clarification, let $\boldsymbol{m}\triangleq-\beta_{2} L \otimes I_{M} \boldsymbol{\sigma}$, $ \, $ $ \boldsymbol{\rho}\triangleq\nabla \boldsymbol{q}(\boldsymbol{x})-\mathbf{1}_{N} \otimes \nabla \mathcal{Q}(\boldsymbol{x}) $ in (\ref{e17}) and (\ref{11}). Denote  $\boldsymbol{z}=col\left\{\boldsymbol{x},\boldsymbol{\sigma}\right\} \in \mathbb{R}^{nN+MN}$. Then a compact form of   (\ref{11}) can be written as
\begin{equation}\label{e11}	
	\dot{\boldsymbol{z}}=G_{\boldsymbol{\Omega}}(\boldsymbol{z}),
\end{equation}
where $$
G_{\boldsymbol{\Omega}}(\boldsymbol{z})\!=\!\left[\begin{array}{l}\!
	\Pi_{\boldsymbol{\Omega}}(\boldsymbol{x}-\beta_{1} U(\boldsymbol{x},  \mathbf{1}_{N} \otimes \mathcal{Q}(\boldsymbol{x})+\boldsymbol{\sigma}))\!-\!\boldsymbol{x}\\
	\boldsymbol{m}\!+\!\boldsymbol{\rho}^{\mathrm{T}}\!\! \left(\Pi_{\boldsymbol{\Omega}}\!\left(\boldsymbol{x}\!-\!\beta_{1} U\!\left(\boldsymbol{x}, \mathbf{1}_{N} \!\otimes\! \mathcal{Q}(\boldsymbol{x})\!+\!\boldsymbol{\sigma}\right)\right)\!-\!\boldsymbol{x}\right)
\end{array}\!\right].
$$

		In essence, from Lemma \ref{l11}, the conclusion of exponential convergence  is also applicable to (\ref{e11}). According to this property,
		it follows from the converse theorem for exponentially stable systems  \cite[Theorem 4.14]{khalil2002nonlinear} that there exists a  Lyapunov  function $ V_{\boldsymbol{\Omega}}(\boldsymbol{z}) $ of  (\ref{e11}) satisfying the following inequalities,
\begin{equation}\label{e22}	
	\begin{array}{c}
		a_{1}\|\boldsymbol{z}-\boldsymbol{z}^{*}(\boldsymbol{\Omega})\|^{2} \leq V_{\boldsymbol{\Omega}}(\boldsymbol{z}) \leq a_{2}\|\boldsymbol{z}-\boldsymbol{z}^{*}(\boldsymbol{\Omega})\|^{2}, \\
		\dot{V}_{\boldsymbol{\Omega}} \leq-a_{3}\|\boldsymbol{z}-\boldsymbol{z}^{*}(\boldsymbol{\Omega})\|^{2}, \\
		\left\|\frac{\partial V_{\boldsymbol{\Omega}}}{\partial \boldsymbol{z}}\right\| \leq a_{4}\|\boldsymbol{z}-\boldsymbol{z}^{*}(\boldsymbol{\Omega})\|,
	\end{array}
\end{equation}
where $a_{1}$, $a_{2}$, $a_{3}$, and $a_{4}$ are positive constants, and $ \boldsymbol{z}^{*}(\boldsymbol{\Omega})=col\left\{\boldsymbol{x}^{*}(\boldsymbol{\Omega}),\boldsymbol{\sigma}^{*}\left(\boldsymbol{\Omega}\right) \right\}$ is the  exponentially stable equilibrium point of system (\ref{e11}).

Moreover,  (\ref{e17})  can be rewritten as
\begin{equation}\label{e60}	
	\dot{\boldsymbol{z}}=G_{\boldsymbol{\mathcal{D}}_{s}}(\boldsymbol{z}),
\end{equation}
with
$$
G_{\boldsymbol{\mathcal{D}}_{s}}(\boldsymbol{z})\!=\!\left[\begin{array}{l}\!
	\Pi_{\boldsymbol{\mathcal{D}}_{s}}(\boldsymbol{x}-\beta_{1} U(\boldsymbol{x}, \mathbf{1}_{N} \otimes \mathcal{Q}(\boldsymbol{x})+\boldsymbol{\sigma}))\!-\!\boldsymbol{x}\\
	\boldsymbol{m}\!+\!\boldsymbol{\rho}^{\mathrm{T}}\!\! \left(\Pi_{\boldsymbol{\mathcal{D}}_{s}}\!\left(\boldsymbol{x}\!-\!\beta_{1} U\!\left(\boldsymbol{x}, \mathbf{1}_{N} \!\otimes\! \mathcal{Q}(\boldsymbol{x})\!+\!\boldsymbol{\sigma}\right)\right)\!-\!\boldsymbol{x}\right)
\end{array}\!\right],
$$
which can be regarded as a  perturbed system of   (\ref{e11}).
Denote 	
$$ e(\boldsymbol{z})\triangleq G_{\boldsymbol{\mathcal{D}}_{s}}(\boldsymbol{z})-G_{\boldsymbol{\Omega}}(\boldsymbol{z}).$$Consequently,  system  (\ref{e60}) can be rewritten as
\begin{equation}\label{e12}	
	\dot{\boldsymbol{z}}
	=G_{\boldsymbol{\Omega}}(\boldsymbol{z})+e(\boldsymbol{z}),
\end{equation}
with the perturbation term as
$$
e(\boldsymbol{z})\!=\!\left[\!\begin{array}{l}
	\Pi_{\boldsymbol{\mathcal{D}}_{s}}(\boldsymbol{x}-\beta_{1} U(\boldsymbol{x}, \boldsymbol{\zeta}))\!-\!\Pi_{\boldsymbol{\Omega}}(\boldsymbol{x}-\beta_{1} U(\boldsymbol{x}, \boldsymbol{\zeta}))\\
	\boldsymbol{\rho}^{\mathrm{T}} \!\!
	\left(\Pi_{\boldsymbol{\mathcal{D}}_{s}}\left(\boldsymbol{x}-\beta_{1} U(\boldsymbol{x}, \boldsymbol{\zeta})\right)\!-\!\Pi_{\boldsymbol{\Omega}}\left(\boldsymbol{x}-\beta_{1} U(\boldsymbol{x}, \boldsymbol{\zeta})\right)\right)
\end{array}\!\!\right]\!.
$$

Then we investigate the upper bound of $ \epsilon $.
Note that  $ e(\boldsymbol{z}) $ reflects the difference   in projected dynamics on  inscribed polyhedrons $ \boldsymbol{\mathcal{D}}_{s} $ and  original action sets $ \boldsymbol{\Omega} $, respectively.  It is  essentially caused by  the  approximation of game (\ref{f1}).  Consider an arbitrary approximate construction  based on Hausdorff distances
$ \boldsymbol{H}=col(h_{1},\cdots,h_{N}) $, where $h_{i}=H(\Omega_{i},\mathcal{D}_{s_{i}}^{i})$ represents the Hausdorff  distance between the original set  $ \Omega_{i}$ and its inscribed polyhedron $ \mathcal{D}_{s_{i}}^{i} $ for $ i\in \mathcal{I}$.
Then the following lemma shows an  upper bound of   $ \|e(\boldsymbol{z})\| $, whose proof is given in Appendix \ref{a13}
\begin{lemma}\label{t3}	
	Under Assumption 1,   given the Hausdorff distances $\boldsymbol{H}$, we have
	\begin{align}\label{e24}	
		\|e(\boldsymbol{z})\|&\leq\delta(\boldsymbol{H})\\
		&=(1+c_{3})\sqrt{ \sum_{i=1}^{N}\left(\frac{2}{ \nu_{i}}\operatorname{arccos}(1-\nu_{i}h_{i})+h_{i}\right)^{2}},\notag
	\end{align}
	where $ c_{3} $ is the  Lipschitz constant of $ q_{i} $,  and $\nu_{i}$ is a constructive curvature related merely to the structure of $ \Omega_{i}$  for  $ i\in \mathcal{I} $.
\end{lemma}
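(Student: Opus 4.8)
The plan is to exploit the Cartesian‑product structure of the constraint sets, reduce the perturbation bound to a single‑player projection estimate, and then prove that estimate by a planar chord–arc argument. First I would record that $\boldsymbol{\mathcal{D}}_{s}=\prod_{i=1}^{N}\mathcal{D}^{i}_{s_{i}}$ and $\boldsymbol{\Omega}=\prod_{i=1}^{N}\Omega_{i}$, so both projections split blockwise: writing $\boldsymbol{w}\triangleq\boldsymbol{x}-\beta_{1}U(\boldsymbol{x},\boldsymbol{\zeta})$ with blocks $w_{i}$, one has $\Pi_{\boldsymbol{\mathcal{D}}_{s}}(\boldsymbol{w})=col(\Pi_{\mathcal{D}^{i}_{s_{i}}}(w_{i}))$ and likewise for $\boldsymbol{\Omega}$. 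Hence, with $\Delta\triangleq\Pi_{\boldsymbol{\mathcal{D}}_{s}}(\boldsymbol{w})-\Pi_{\boldsymbol{\Omega}}(\boldsymbol{w})$, the top block of $e(\boldsymbol{z})$ is $\Delta$ and the bottom block is $\boldsymbol{\rho}^{\mathrm{T}}\Delta$; using the $c_{3}$‑Lipschitz continuity of each $q_{i}$ together with $\mathcal{Q}=\frac{1}{N}\sum_{i}q_{i}$ to bound the operator norm of $\boldsymbol{\rho}=\nabla\boldsymbol{q}(\boldsymbol{x})-\mathbf{1}_{N}\otimes\nabla\mathcal{Q}(\boldsymbol{x})$ by $c_{3}$, the triangle inequality gives $\|e(\boldsymbol{z})\|\le\|\Delta\|+\|\boldsymbol{\rho}^{\mathrm{T}}\Delta\|\le(1+c_{3})\|\Delta\|$, while the product structure yields $\|\Delta\|^{2}=\sum_{i=1}^{N}\|\Pi_{\mathcal{D}^{i}_{s_{i}}}(w_{i})-\Pi_{\Omega_{i}}(w_{i})\|^{2}$.

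It then remains to prove the single‑set estimate $\|\Pi_{\mathcal{D}^{i}_{s_{i}}}(w_{i})-\Pi_{\Omega_{i}}(w_{i})\|\le\frac{2}{\nu_{i}}\operatorname{arccos}(1-\nu_{i}h_{i})+h_{i}$. Fix $i$ and abbreviate $K=\mathcal{D}^{i}_{s_{i}}\subseteq\Omega=\Omega_{i}$, $h=h_{i}$, $w=w_{i}$, $p=\Pi_{\Omega}(w)$, $\tilde p=\Pi_{K}(w)$. If $p\in K$ then $\tilde p=p$ and the bound is trivial; if $w\in\Omega$ then $p=w$ and $\|\tilde p-p\|=dist(w,K)\le H(K,\Omega)\le h$; so I may assume $w\notin\Omega$ and $p\in\partial\Omega\setminus K$. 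Here I would split $\tilde p-p$ into a "normal" part transversal to $\partial\Omega$ at $p$ and a "tangential" part. The normal part is of order $h$: since each facet of $K$ is a chord‑hyperplane of $\Omega$, the cap of $\Omega$ it cuts off has height at most $H(K,\Omega)\le h$ (a cap point $x$ and $K$ lie on opposite sides of the facet hyperplane, so $dist(x,K)$ dominates that height), and combining this with nonexpansiveness of $\Pi_{\Omega}$ and the Hausdorff bound applied to $p$, which give $\|w-p\|\le\|w-\tilde p\|\le\|w-p\|+h$, pins down how far $\tilde p$ can sit off the supporting hyperplane of $\Omega$ at $p$. The tangential part I would control by passing to the $2$‑plane spanned by the outward normal at $p$ and $\tilde p-p$ and reducing to the planar picture of a convex arc of curvature at most $\nu_{i}$ carrying a chord whose circular segment has height at most $h$; the drift between $p$ and the nearest point of that chord is then at most the arc length of the matching arc of a circle of radius $1/\nu_{i}$, which by the elementary identity $2r\theta$ with $r=1/\nu_{i}$ and $\cos\theta=1-\nu_{i}h$ equals $\frac{2}{\nu_{i}}\operatorname{arccos}(1-\nu_{i}h)$. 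Adding the two contributions and summing over $i$ gives $\|\Delta\|\le\sqrt{\sum_{i=1}^{N}\left(\frac{2}{\nu_{i}}\operatorname{arccos}(1-\nu_{i}h_{i})+h_{i}\right)^{2}}$, and the first paragraph then delivers $\|e(\boldsymbol{z})\|\le\delta(\boldsymbol{H})$.

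The main obstacle is this single‑set geometric estimate, specifically making the normal/tangential decomposition rigorous and justifying the planar reduction: one must formalise the "constructive curvature" $\nu_{i}$ so that every cap cut off by an inscribed facet of $K$ is dominated by a spherical cap of a circle of radius $1/\nu_{i}$; handle the case where $p$ and $\tilde p$ lie in different caps, so that the tangential drift is tracked across several facets yet is still shown to be dominated by the single arc‑length term; and pick the auxiliary boundary point used to separate the $h_{i}$‑contribution from the curvature contribution. Everything else — the blockwise splitting of the projections, the operator‑norm bound on $\boldsymbol{\rho}$, and the assembly of the per‑player bounds into $\delta(\boldsymbol{H})$ — is routine once this estimate is available.
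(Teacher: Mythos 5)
Your proposal follows essentially the same route as the paper's proof: the blockwise splitting over the product sets, the factor $(1+c_{3})$ from the operator norm of $\boldsymbol{\rho}$, and the per-player bound obtained by inserting an auxiliary boundary point of $\Omega_{i}$ so that the projection gap is at most $h_{i}$ (the normal/Hausdorff part) plus the arc length $\frac{2}{\nu_{i}}\operatorname{arccos}(1-\nu_{i}h_{i})$ of a constructed circular arc of curvature $\nu_{i}$ (your tangential part, the paper's $\|x^{1}-x^{3}\|\le l_{i}$). The geometric step you flag as the main obstacle is treated at the same informal level in the paper itself, so your attempt is faithful to, and no less rigorous than, the published argument.
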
	

From Lemma \ref{t3}, since  $\nu_{i}$ is independent of any approximation  of $ \Omega_{i} $,
the bound of $ e(\boldsymbol{z}) $ is explicitly affected by  Hausdorff  distances $\boldsymbol{H}$. Clearly, a lower metric yields a lower bounds of $ e(\boldsymbol{z}) $. Furthermore, the next lemma investigates the influence of  $ e(\boldsymbol{z}) $ on   perturbed system (\ref{e12}), whose proof  is shown in Appendix \ref{a5}
\begin{lemma} \label{l7}
	Let $ V_{\boldsymbol{\Omega}}(\boldsymbol{z}) $ be a Lyapunov  function  satisfying (\ref{e22}) in $  \Xi=\{\boldsymbol{z}\in \boldsymbol{\Omega}\times\mathbb{R}^{MN}|\|\boldsymbol{z}-\boldsymbol{z}^{*}(\boldsymbol{\Omega})\|<r \}  $, which is a compact set. Suppose
	\begin{equation}\label{39}
		\| e(\boldsymbol{z})\|\leq\delta(\boldsymbol{H})<\frac{a_{3}}{a_{4}}\sqrt{\frac{a_{1}}{a_{2}}}\mu r,
	\end{equation}
	for all $\boldsymbol{z}\in \Xi $ and  a positive constant $ \mu<1  $. Then, for all $ \|\boldsymbol{z}(t_{0})-\boldsymbol{z}^{*}(\boldsymbol{\Omega})\|\leq \sqrt{a_{1}/a_{2}}r $, the solution $ \boldsymbol{z}(t)  $ of the perturbed system (\ref{e12}) satisfies
	\begin{equation}\label{qd}
		\|\boldsymbol{z}(t)-\boldsymbol{z}^{*}(\boldsymbol{\Omega})\| \leq \sqrt{\frac{a_{2}}{a_{1}}} e^{-\omega (t-t_{0})} \left\|\boldsymbol{z}(t_{0})-\boldsymbol{z}^{*}(\boldsymbol{\Omega})\right\|, \;
	\end{equation}
	for $ t_{0} \leq t<t_{0}+T $, and
	\begin{equation}\label{qe}
		\|\boldsymbol{z}(t)-\boldsymbol{z}^{*}(\boldsymbol{\Omega})\| \leq R,
	\end{equation}
	for $ t\geq t_{0}+T $,
	where $ T $ is a finite positive scalar,
	$$\omega=\frac{(1-\mu)a_{3}}{2a_{2}},\; R=\frac{a_{4}}{\mu a_{3}} \sqrt{\frac{a_{2}}{a_{1}}}\delta(\boldsymbol{H}). $$	
\end{lemma}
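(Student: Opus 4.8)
The plan is to regard the perturbed dynamics (\ref{e12}) as a perturbation of the nominal system (\ref{e11}), whose equilibrium $\boldsymbol{z}^{*}(\boldsymbol{\Omega})$ is exponentially stable (Lemma~\ref{l11} applied to (\ref{e11})), and to carry out the ultimate‑boundedness argument for perturbed exponentially stable systems in the spirit of \cite[Lemma~9.2]{khalil2002nonlinear}, using $V_{\boldsymbol{\Omega}}$ together with the estimates (\ref{e22}) as the converse Lyapunov function on the compact set $\Xi$ on which it is known to be valid.

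First I would differentiate $V_{\boldsymbol{\Omega}}$ along solutions of (\ref{e12}). From $\dot{V}_{\boldsymbol{\Omega}}=\frac{\partial V_{\boldsymbol{\Omega}}}{\partial \boldsymbol{z}}\left(G_{\boldsymbol{\Omega}}(\boldsymbol{z})+e(\boldsymbol{z})\right)$, the Cauchy--Schwarz inequality, the second and third inequalities in (\ref{e22}), and the bound $\|e(\boldsymbol{z})\|\le\delta(\boldsymbol{H})$ of Lemma~\ref{t3}, one gets
$$
\dot{V}_{\boldsymbol{\Omega}}\le -a_{3}\|\boldsymbol{z}-\boldsymbol{z}^{*}(\boldsymbol{\Omega})\|^{2}+a_{4}\,\delta(\boldsymbol{H})\,\|\boldsymbol{z}-\boldsymbol{z}^{*}(\boldsymbol{\Omega})\|.
$$
Writing $-a_{3}\|\cdot\|^{2}=-(1-\mu)a_{3}\|\cdot\|^{2}-\mu a_{3}\|\cdot\|^{2}$ and noting that the last two terms are nonpositive whenever $\|\boldsymbol{z}-\boldsymbol{z}^{*}(\boldsymbol{\Omega})\|\ge\mu_{0}:=a_{4}\delta(\boldsymbol{H})/(\mu a_{3})$, we obtain on that region
$$
\dot{V}_{\boldsymbol{\Omega}}\le -(1-\mu)a_{3}\|\boldsymbol{z}-\boldsymbol{z}^{*}(\boldsymbol{\Omega})\|^{2}\le -\frac{(1-\mu)a_{3}}{a_{2}}\,V_{\boldsymbol{\Omega}}(\boldsymbol{z}).
$$

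Next I would set up the nested sublevel sets $\Lambda_{\rho}=\{\boldsymbol{z}:V_{\boldsymbol{\Omega}}(\boldsymbol{z})\le\rho\}$ with $\rho=a_{1}r^{2}$, and $\Lambda_{\varepsilon_{0}}$ with $\varepsilon_{0}=a_{2}\mu_{0}^{2}$. The estimates (\ref{e22}) give $\Lambda_{\rho}\subseteq\{\|\boldsymbol{z}-\boldsymbol{z}^{*}(\boldsymbol{\Omega})\|\le r\}\subseteq\Xi$ and $\{\|\boldsymbol{z}-\boldsymbol{z}^{*}(\boldsymbol{\Omega})\|\le\mu_{0}\}\subseteq\Lambda_{\varepsilon_{0}}$, while hypothesis (\ref{39}) is precisely $\mu_{0}<\sqrt{a_{1}/a_{2}}\,r$, which forces $\varepsilon_{0}<\rho$ and, since $a_{1}\le a_{2}$, also $\mu_{0}<r$. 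On $\partial\Lambda_{\rho}$ one has $\|\boldsymbol{z}-\boldsymbol{z}^{*}(\boldsymbol{\Omega})\|\ge\sqrt{\rho/a_{2}}=\sqrt{a_{1}/a_{2}}\,r>\mu_{0}$, and on $\partial\Lambda_{\varepsilon_{0}}$ one has $\|\boldsymbol{z}-\boldsymbol{z}^{*}(\boldsymbol{\Omega})\|\ge\mu_{0}$, so the displayed inequality gives $\dot{V}_{\boldsymbol{\Omega}}<0$ on both boundaries and both sets are positively invariant. The initial bound $\|\boldsymbol{z}(t_{0})-\boldsymbol{z}^{*}(\boldsymbol{\Omega})\|\le\sqrt{a_{1}/a_{2}}\,r$ yields $V_{\boldsymbol{\Omega}}(\boldsymbol{z}(t_{0}))\le\rho$, so $\boldsymbol{z}(t)\in\Lambda_{\rho}\subseteq\Xi$ for all $t\ge t_{0}$ and the differential inequality is valid as long as $\boldsymbol{z}(t)\notin\Lambda_{\varepsilon_{0}}$ (there $\|\boldsymbol{z}-\boldsymbol{z}^{*}(\boldsymbol{\Omega})\|^{2}\ge V_{\boldsymbol{\Omega}}/a_{2}>\mu_{0}^{2}$).

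Finally I would invoke the comparison lemma \cite{khalil2002nonlinear}: while $\boldsymbol{z}(t)\notin\Lambda_{\varepsilon_{0}}$, integrating $\dot{V}_{\boldsymbol{\Omega}}\le-\frac{(1-\mu)a_{3}}{a_{2}}V_{\boldsymbol{\Omega}}$ gives $V_{\boldsymbol{\Omega}}(\boldsymbol{z}(t))\le e^{-\frac{(1-\mu)a_{3}}{a_{2}}(t-t_{0})}V_{\boldsymbol{\Omega}}(\boldsymbol{z}(t_{0}))$, and sandwiching with (\ref{e22}) and taking square roots produces (\ref{qd}) with $\omega=(1-\mu)a_{3}/(2a_{2})$. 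Since $\dot{V}_{\boldsymbol{\Omega}}\le-\frac{(1-\mu)a_{3}}{a_{2}}\varepsilon_{0}<0$ outside $\Lambda_{\varepsilon_{0}}$, the trajectory reaches $\Lambda_{\varepsilon_{0}}$ in a finite time $T\le a_{2}(\rho-\varepsilon_{0})/\big((1-\mu)a_{3}\varepsilon_{0}\big)$ and, by positive invariance, remains there; for $\boldsymbol{z}\in\Lambda_{\varepsilon_{0}}$ the lower bound in (\ref{e22}) gives $\|\boldsymbol{z}-\boldsymbol{z}^{*}(\boldsymbol{\Omega})\|\le\sqrt{\varepsilon_{0}/a_{1}}=\sqrt{a_{2}/a_{1}}\,\mu_{0}=R$, which is (\ref{qe}). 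The main obstacle is not analytic but bookkeeping: one must check that (\ref{39}) is exactly the condition making $\Lambda_{\varepsilon_{0}}\subsetneq\Lambda_{\rho}$ and $\dot{V}_{\boldsymbol{\Omega}}$ strictly negative on the relevant level sets, and that every estimate stays inside the compact domain $\Xi$ where $V_{\boldsymbol{\Omega}}$ is guaranteed to satisfy (\ref{e22}); once that is in place, the rest is the classical perturbed exponential‑stability estimate.
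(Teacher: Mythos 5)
Your proposal is correct and follows essentially the same route as the paper's own proof in Appendix C: both differentiate $V_{\boldsymbol{\Omega}}$ along the perturbed dynamics, split off the $-\mu a_{3}\|\boldsymbol{z}-\boldsymbol{z}^{*}(\boldsymbol{\Omega})\|^{2}$ term to absorb the perturbation when $\|\boldsymbol{z}-\boldsymbol{z}^{*}(\boldsymbol{\Omega})\|\geq a_{4}\delta/(\mu a_{3})$, and then run the classical ultimate-boundedness estimate of \cite[Lemma 9.2]{khalil2002nonlinear}. If anything, your bookkeeping with the nested sublevel sets $\Lambda_{\rho}$, $\Lambda_{\varepsilon_{0}}$ and the explicit role of hypothesis (\ref{39}) is more careful than the paper's terser version, which leaves the positive-invariance step implicit.
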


Lemma \ref{l7} explains that  if  $ e(\boldsymbol{z}) $ is small enough, then    $ \left\| \boldsymbol{z}(t)-\boldsymbol{z}^{*}(\boldsymbol{\Omega})  \right\| $ of (\ref{e12}) is ultimately bounded by a small bound, where $ \boldsymbol{z}^{*}(\boldsymbol{\Omega}) = col\left\{\boldsymbol{x}^{*}(\boldsymbol{\Omega}),\boldsymbol{\sigma}^{*}(\boldsymbol{\Omega}) \right\}$ is the exponentially stable equilibrium of the nominal system  (\ref{e11}). Moreover, since $a_{1}$, $a_{2}$, $a_{3}$, $a_{4}$ and $\mu$  are constants,
the global exponential convergence of (\ref{e11}) guarantees that for any $\delta(\boldsymbol{H})$ and $ \|\boldsymbol{z}(t_{0})-\boldsymbol{z}^{*}(\boldsymbol{\Omega})\|$, we can choose $ r $ large enough to satisfy (\ref{39}) and the initial condition.  Therefore,  by the exponential convergence of (\ref{e11}), we can analyze the accuracy of $\epsilon$  based on continuous-time dynamics and bounded stability of perturbed systems. Obviously,    
from (\ref{qe}),  $ \left\| \boldsymbol{z}^{*}(\boldsymbol{\mathcal{D}}_{s})-\boldsymbol{z}^{*}(\boldsymbol{\Omega})  \right\| $ and $ \left\| \boldsymbol{x}^{*}(\boldsymbol{\mathcal{D}}_{s})-\boldsymbol{x}^{*}(\boldsymbol{\Omega})  \right\| $  are bounded, where $ \boldsymbol{z}^{*}\left(\boldsymbol{\mathcal{D}}_{s}\right)= col\left\{\boldsymbol{x}^{*}(\boldsymbol{\mathcal{D}}_{s}),\boldsymbol{\sigma}^{*}(\boldsymbol{\mathcal{D}}_{s}) \right\} $ is the equilibrium of (\ref{e12}).
Recalling the definition of $ \epsilon $-NE,  this upper bound of $ \left\| \boldsymbol{x}^{*}(\boldsymbol{\mathcal{D}}_{s})-\boldsymbol{x}^{*}(\boldsymbol{\Omega})  \right\| $  can be  regarded as a discrepancy proportional to the upper bound  of  $ \epsilon $.

Together with  Lemma \ref{t3} and Lemma \ref{l7},  the conclusion about the approximation accuracy  is shown in the following theorem.
\begin{mythm}\label{t2}	
	Under Assumption 1, 
	\begin{equation}\label{e23}	
		\begin{aligned}
			\epsilon&\leq \frac{2a_{4}}{a_{3}} \sqrt{\frac{a_{2}}{a_{1}}}\frac{\varsigma_{i}}{\mu}	\delta(\boldsymbol{H}),
		\end{aligned}
	\end{equation}
	where   the  constant $ \mu \in(0,1)  $, $a_{1}$, $a_{2}$, $a_{3}$, and $a_{4}$ are positive  constants in (\ref{e22}), $ \varsigma_{i} $ is the Lipschitz constant of $ J_{i} $,  and $ \delta(\boldsymbol{H}) $ is 
	defined in (\ref{e24}).
\end{mythm}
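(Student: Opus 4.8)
The plan is to combine the ultimate bound on the perturbed dynamics \eqref{e12} furnished by Lemma~\ref{l7} with the Lipschitz estimate of the payoff functions already exploited in the proof of Theorem~\ref{t1}. First I would note that $\boldsymbol{z}^{*}(\boldsymbol{\mathcal{D}}_{s})=col\{\boldsymbol{x}^{*}(\boldsymbol{\mathcal{D}}_{s}),\boldsymbol{\sigma}^{*}(\boldsymbol{\mathcal{D}}_{s})\}$ is an equilibrium of \eqref{e12}, so the constant curve $\boldsymbol{z}(t)\equiv\boldsymbol{z}^{*}(\boldsymbol{\mathcal{D}}_{s})$ is a solution of \eqref{e12}. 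Using the global exponential stability of the nominal system \eqref{e11}, the radius $r$ in Lemma~\ref{l7} can be enlarged so that this trajectory starts inside $\Xi$ with $\|\boldsymbol{z}(t_{0})-\boldsymbol{z}^{*}(\boldsymbol{\Omega})\|\le\sqrt{a_{1}/a_{2}}\,r$ and the smallness condition \eqref{39} holds (the latter is legitimate because $\delta(\boldsymbol{H})$ from Lemma~\ref{t3} is a uniform upper bound for $\|e(\boldsymbol{z})\|$ on $\Xi$). Then \eqref{qe} applied to this trajectory gives
\[
\|\boldsymbol{z}^{*}(\boldsymbol{\mathcal{D}}_{s})-\boldsymbol{z}^{*}(\boldsymbol{\Omega})\|\le R=\frac{a_{4}}{\mu a_{3}}\sqrt{\frac{a_{2}}{a_{1}}}\,\delta(\boldsymbol{H}),
\]
and, since the $\boldsymbol{x}$-block is a subvector of $\boldsymbol{z}$, also $\|\boldsymbol{x}^{*}(\boldsymbol{\mathcal{D}}_{s})-\boldsymbol{x}^{*}(\boldsymbol{\Omega})\|\le R$.

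Next I would reuse the telescoping argument from the proof of Theorem~\ref{t1}: for any $i\in\mathcal{I}$ and any $x_{i}'\in\Omega_{i}$, inserting $\pm J_{i}(x_{i}',\boldsymbol{x}_{-i}^{*}(\boldsymbol{\Omega}))$ and $\pm J_{i}(\boldsymbol{x}^{*}(\boldsymbol{\Omega}))$ yields
\[
J_{i}(\boldsymbol{x}^{*}(\boldsymbol{\mathcal{D}}_{s}))-J_{i}(x_{i}',\boldsymbol{x}_{-i}^{*}(\boldsymbol{\mathcal{D}}_{s}))\le \varsigma_{i}\|\boldsymbol{x}^{*}(\boldsymbol{\mathcal{D}}_{s})-\boldsymbol{x}^{*}(\boldsymbol{\Omega})\|+\varsigma_{i}\|\boldsymbol{x}_{-i}^{*}(\boldsymbol{\Omega})-\boldsymbol{x}_{-i}^{*}(\boldsymbol{\mathcal{D}}_{s})\|,
\]
where the remaining term $J_{i}(\boldsymbol{x}^{*}(\boldsymbol{\Omega}))-J_{i}(x_{i}',\boldsymbol{x}_{-i}^{*}(\boldsymbol{\Omega}))\le 0$ is discarded because $\boldsymbol{x}^{*}(\boldsymbol{\Omega})$ is the NE of \eqref{f1} by Lemma~\ref{p3}, and $\varsigma_{i}$ is the Lipschitz constant of $J_{i}$ from Assumption~1. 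Both norms on the right are dominated by $\|\boldsymbol{x}^{*}(\boldsymbol{\mathcal{D}}_{s})-\boldsymbol{x}^{*}(\boldsymbol{\Omega})\|$, so the right-hand side is at most $2\varsigma_{i}\|\boldsymbol{x}^{*}(\boldsymbol{\mathcal{D}}_{s})-\boldsymbol{x}^{*}(\boldsymbol{\Omega})\|\le 2\varsigma_{i}R$, which equals $\tfrac{2a_{4}}{a_{3}}\sqrt{a_{2}/a_{1}}\,\tfrac{\varsigma_{i}}{\mu}\,\delta(\boldsymbol{H})$. Since $\boldsymbol{x}^{*}(\boldsymbol{\mathcal{D}}_{s})\in\boldsymbol{\mathcal{D}}_{s}\subseteq\boldsymbol{\Omega}$ and $x_{i}'\in\Omega_{i}$ is arbitrary, Definition~\ref{d2} identifies $\boldsymbol{x}^{*}(\boldsymbol{\mathcal{D}}_{s})$ as an $\epsilon$-NE with $\epsilon$ bounded by this quantity, i.e.\ \eqref{e23}.

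The main obstacle is bookkeeping rather than conceptual: one must argue carefully that the two hypotheses of Lemma~\ref{l7} — the perturbation smallness \eqref{39} and the initial-condition bound — can be met at the same time for the equilibrium trajectory. This hinges on the converse-Lyapunov estimates \eqref{e22} being valid on arbitrarily large sublevel sets, which follows from the \emph{global} exponential stability of \eqref{e11}, so that $r$ can be made large while the constants $a_{1},a_{2},a_{3},a_{4}$ (hence the ratio $\tfrac{a_{3}}{a_{4}}\sqrt{a_{1}/a_{2}}$) are unaffected. A minor additional point to record is that, strictly speaking, a single $\epsilon$ for all players is obtained by taking the maximum over $i$ of the per-player bounds; the statement \eqref{e23} is understood in this per-player sense.
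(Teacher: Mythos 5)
Your proposal is correct and follows essentially the same route as the paper: it combines the ultimate bound $R$ from Lemma \ref{l7} applied to the equilibrium $\boldsymbol{z}^{*}(\boldsymbol{\mathcal{D}}_{s})$ of the perturbed system (\ref{e12}) with the telescoping Lipschitz estimate already used in the proof of Theorem \ref{t1}. Your additional care in verifying that the hypotheses of Lemma \ref{l7} can be met simultaneously (by enlarging $r$ using global exponential stability) and in noting that a uniform $\epsilon$ requires maximizing over $i$ is a welcome tightening of details the paper leaves implicit, but it does not change the argument.
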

\begin{proof}
Similar to the last part in the proof of Theorem 1, recalling the definition of $ \epsilon $-NE,  the  difference  between $  J_{i}(\boldsymbol{x}^{*}(\boldsymbol{\mathcal{D}}_{s}))$ and $J_{i}(x_{i}^{\prime},\boldsymbol{x}^{*}_{-i}(\boldsymbol{\mathcal{D}}_{s})) $ satisfies
$$
\begin{aligned}
	&J_{i}\left(\boldsymbol{x}^{*}\left(\boldsymbol{\mathcal{D}}_{s}\right)\right)-J_{i}\left(x_{i}^{\prime}, \boldsymbol{x}_{-i}^{*}\left(\boldsymbol{\mathcal{D}}_{s}\right)\right) \\
	\leq &\left\|J_{i}\left(x_{i}^{\prime}, \boldsymbol{x}_{-i}^{*}(\boldsymbol{\Omega})\right)-J_{i}\left(x_{i}^{\prime}, \boldsymbol{x}_{-i}^{*}\left(\boldsymbol{\mathcal{D}}_{s}\right)\right)\right\|\\
	&+\left\|J_{i}\left(\boldsymbol{x}^{*}\left(\boldsymbol{\mathcal{D}}_{s}\right)\right)-J_{i}\left(\boldsymbol{x}^{*}(\boldsymbol{\Omega})\right)\right\|\\ &+J_{i}\left(\boldsymbol{x}^{*}(\boldsymbol{\Omega})\right)-J_{i}\left(x_{i}^{\prime}, \boldsymbol{x}_{-i}^{*}(\boldsymbol{\Omega})\right)\\
	\leq & \varsigma_{i}\left\|\boldsymbol{x}^{*}\left(\boldsymbol{\mathcal{D}}_{s}\right)-\boldsymbol{x}^{*}(\boldsymbol{\Omega})\right\|+\varsigma_{i}\left\|\boldsymbol{x}_{-i}^{*}(\boldsymbol{\Omega})-\boldsymbol{x}_{-i}^{*}\left(\boldsymbol{\mathcal{D}}_{s}\right)\right\|,\\
\end{aligned}
$$
where the $ i $th player’s equilibrium strategy is $ x^{*}_{i}(\boldsymbol{\mathcal{D}}_{s}) $    with  respect to $ \boldsymbol{\mathcal{D}}_{s} $ and $ x^{\prime}_{i} $ is arbitrarily chosen from $ \Omega_{i} $. Meanwhile, other players’ strategies remain the same $ \boldsymbol{x}^{*}_{-i}(\boldsymbol{\mathcal{D}}_{s}) $.

Due to  Lemma \ref{l7}, with $ \boldsymbol{z}^{*}\left(\boldsymbol{\mathcal{D}}_{s}\right)= col\left\{\boldsymbol{x}^{*}(\boldsymbol{\mathcal{D}}_{s}),\boldsymbol{\sigma}^{*}(\boldsymbol{\mathcal{D}}_{s}) \right\} $ as the equilibrium of (\ref{e12}), it follows from (\ref{qe}) that $
\left\|\boldsymbol{z}^{*}\left(\boldsymbol{\mathcal{D}}_{s}\right)-\boldsymbol{z}^{*}(\boldsymbol{\Omega})\right\|\leq R
$. Then
$$
\left\|\boldsymbol{x}^{*}\left(\boldsymbol{\mathcal{D}}_{s}\right)-\boldsymbol{x}^{*}(\boldsymbol{\Omega})\right\|\leq R.
$$

As a result,
$$
\begin{aligned}
	&\varsigma_{i}\left\|\boldsymbol{x}^{*}\left(\boldsymbol{\mathcal{D}}_{s}\right)-\boldsymbol{x}^{*}(\boldsymbol{\Omega})\right\|+\varsigma_{i}\left\|\boldsymbol{x}_{-i}^{*}(\boldsymbol{\Omega})-\boldsymbol{x}_{-i}^{*}\left(\boldsymbol{\mathcal{D}}_{s}\right)\right\|\\
	&	\leq \frac{2a_{4}}{a_{3}} \sqrt{\frac{a_{2}}{a_{1}}}\frac{\varsigma_{i}}{\mu}\delta(\boldsymbol{H}),
\end{aligned}
$$
which completes the  proof.		
\end{proof}

	\begin{remark}
	From  (\ref{e23}), the upper bound of  $ \epsilon $  is proportional to the  bound of $ e(\boldsymbol{z}) $,  which indicates that arbitrarily small perturbations will not cause a significant deviation. Moreover, it can be regarded  as the robustness  of the nominal system with an exponentially stable equilibrium. 
	Thus, with the help of the analysis in Section \ref{s3}, we show   the accuracy of $ \epsilon $ based on  bounded stability of perturbed systems,  and give an estimation of the  upper bound.
\end{remark}

\section{Numerical experiments}\label{s6}
We examine
the computational efficiency and  approximation accuracy of Algorithm 1 on  Nash-Cournot games and demand response management models in the following two subsections.

\subsection{For  approximation accuracy}

To illustrate the convergence and approximation, we consider a classical Cournot game played by $ N = 4 $ competitive players over a network as in \cite{koshal2016distributed} and \cite{liang2019distributed}. For $ i \in \mathcal{I}= \{1, \cdots, N\} $, the action set $ \Omega_{i} $ is an elliptical region that
$$\Omega_{i} = \mathbf{E}_{4,3}(0,0)=\left\{x_{i} \in \mathbb{R}^{2}:\frac{x_{i1}^{2}}{4^{2}}+\frac{x_{i2}^{2}}{3^{2}}\leq1\right\}.
$$
The payoff function $ f_{i}\left(x_{i}, \mathcal{Q}(\boldsymbol{x})\right) $ is
\begin{equation}\label{51}
	f_{i}\left(x_{i}, \mathcal{Q}(\boldsymbol{x})\right)=x_{i}^{\mathrm{T}}(d_{i}(x_{i})-p(\mathcal{Q}(\boldsymbol{x}))),
\end{equation}
where $ d_{i}(x_{i})= 0.5(x_{i}+(13-i)\boldsymbol{1}_{2})$
and $p=N\boldsymbol{1}_{2}-0.01\mathcal{Q}(\boldsymbol{x}) $
with
$
\mathcal{Q}(\boldsymbol{x})=\frac{1}{N} \sum_{j=1}^{N} x_{j}.
$

Clearly, the game model satisfies Assumption 1 with constants
$ \kappa=1 $, $ c_{1}=1.0025 $, $ c_{2}=0.01 $, and $ c_{3}=1 $. We adopt the following ring graph as the  network $ \mathcal{G} $,
$$
1\rightarrow2\rightarrow3\rightarrow4\rightarrow1.
$$
To render condition (\ref{12}),  assign $ \beta_{1} = 0.1 $ and $ \beta_{2} = 1 $. Also, set tolerance $ t_{tol}= 10^{-3} $ and the terminal criterions
$$ \|\dot{\boldsymbol{x}}(t)\| \leq t_{tol},\quad \|\dot{\boldsymbol{\zeta}}(t)\| \leq t_{tol},  $$
where $ \dot{\boldsymbol{x}}(t) $ and $ \dot{\boldsymbol{\zeta}}(t) $ were given in (\ref{e10}).

We present trajectories by approximating $ \mathbf{E}_{4,3}(0,0) $ with inscribed octagons.  The trajectories of one dimension of each strategy $ x_{i} $ are shown in Fig. \ref{fig4}. The strategies of all players converge to their corresponding equilibrium points with an  exponential rate, which verifies the correctness of our algorithm.
	\begin{figure}
	\centering	
	\includegraphics[scale=0.55]{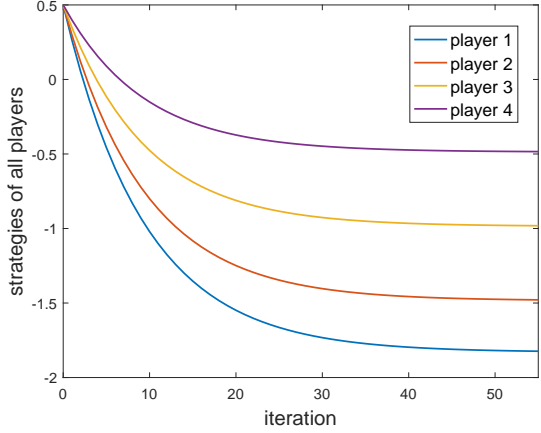}\\
	\caption{ Trajectories of all players’ strategies.}
	\label{fig4}
\end{figure}
\begin{figure}
	\centering	
	\includegraphics[scale=0.55]{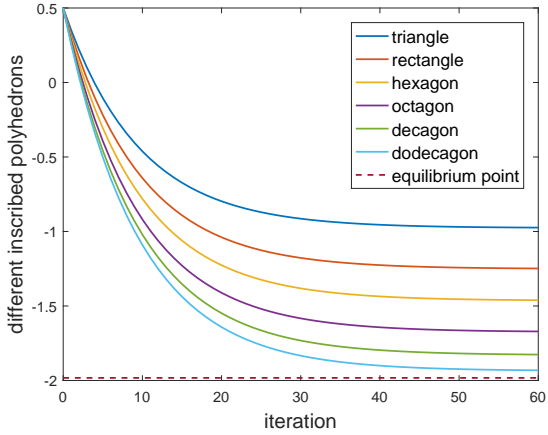}\\
	\caption{ Trajectories of approximation by different inscribed polyhedrons.}
	\label{fig6}
\end{figure}
\begin{table*}
	\centering
	\label{tab1}
	\normalsize
	\caption{Performance of different approximations.}
	\setlength\tabcolsep{12pt}
	\renewcommand\arraystretch{1.2}
	\begin{tabular}{c|c|c|c|c|c|c}
		\hline
		\hline
		Polyhedrons &  Triangle &  Rectangle &  Hexagon &  Octagon&Decagonal&Dodecagonal \\ \hline
		Values of $ \epsilon $ & 1.3470 & 0.8491 & 0.5187 & 0.2261 &0.1069&0.0473\\ \hline\hline
	\end{tabular}
\end{table*}
Fig. \ref{fig6} shows different strategy trajectories of one fixed player with inscribed triangles, rectangles, hexagons, octagons, decagons, and dodecagons to approximate $ \mathbf{E}_{4,3}(0,0) $, respectively. The vertical axis represents the value of the convergent $ \epsilon $-NE and the horizontal axis represents the iteration time of Algorithm 1. As can be seen from Fig. \ref{fig6},  equilibria with different polyhedrons get closer to the exact solution with more accurate approximations.

Moreover,  the numerical values of $ \epsilon $  under different types of approximation are listed in Table 1.
Obviously,  the value of $ \epsilon $ decreases with the increase of the edges of polyhedrons and the decrease of Hausdorff distances, which is consistent with the approximation results in the previous sections.

\subsection{For computational efficiency}
Here, we show the computational efficiency of Algorithm 1 on a class of demand response management problems under  various network scales and parameter settings.

Consider $ N $ electricity users with the demand of energy consumption as in \cite{ye2016game},  \cite{liang2017distributed}. For $ i \in \mathcal{I}= \{1, ..., N\} $, the action set $ \Omega_{i} $ is the energy consumption of the $ i $th user and $ f_{i}\left(x_{i}, \mathcal{Q}(\boldsymbol{x})\right) $ is the cost function in the following
form,
\begin{equation}\label{54}
	f_{i}\left(x_{i}, \mathcal{Q}(\boldsymbol{x})\right)=\iota_{i}(x_{i}-\pi_{i})^{\mathrm{T}}(x_{i}-\pi_{i})+ x_{i}^{\mathrm{T}}P (\mathcal{Q}(\boldsymbol{x})),
\end{equation}
where $ \iota_{i} $ is constant and $ \pi_{i} $ is the nominal value of energy consumption for $ i = \{1, ..., N\} $, with  $ P (\mathcal{Q}(\boldsymbol{x}))=\omega_{i}N\mathcal{Q}(\boldsymbol{x})+p_{0} $ and
\begin{equation}\label{52}
	\mathcal{Q}(\boldsymbol{x})=\frac{1}{N} \sum_{j=1}^{N} x_{j}.
\end{equation}
Set $ N=10 $, $ \iota_{i}=0.05 $, $ \omega_{i}=0.001 $, $ p_{0}=\boldsymbol{1}_{3} $,   and $ \pi_{i}=0.5(10-i)\boldsymbol{1}_{3}\in\mathbb{R}^{3}  $. Then the action set $ \Omega_{i} $ of each player is an elliptical region denoted by
$ \mathbf{E}_{7,6,5}(0,0,0) $.

Take a ring graph as the communication network $ \mathcal{G} $,
$$
1\rightarrow2\rightarrow\cdots\rightarrow10\rightarrow1,
$$
and assign $ \beta_{1} = 0.5 $ and $ \beta_{2} = 2 $ to meet the condition (\ref{12}). Besides, we set tolerance $ t_{tol}= 10^{-3} $.

\begin{figure}
	\centering	
	\includegraphics[scale=0.55]{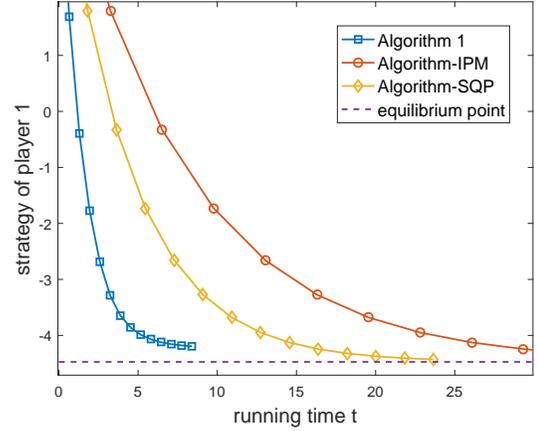}\\
	\caption{Strategy trajectories of player 1   with different algorithms.}
	\label{fig7}
\end{figure}

Here, we  use numerical optimization to directly process the  projections on nonlinear constraints $ \mathbf{E}_{7,6,5}(0,0,0) $ for comparison. Fig. \ref{fig7}  shows the different strategy trajectories of one fixed player in dynamics (\ref{11})  by  Algorithm 1  (i.e., with approximation), the algorithm  based on sequential quadratic program (Algorithm-SQP) and the algorithm  based on the interior point method (Algorithm-IPM) (i.e., without approximation). Algorithm 1 makes projections  on the  inscribed polyhedrons of $ \mathbf{E}_{7,6,5}(0,0,0) $ with the number of vertice $s=12$, while Algorithm-SQP and Algorithm-IPM  make projections on  $ \mathbf{E}_{7,6,5}(0,0,0) $ directly. In Fig. \ref{fig7}, the vertical axis represents the value of the convergent equilibria, and the horizontal axis represents the real running time in seconds. Clearly, Algorithm 1 converges faster, although it does not converge to the exact  equilibrium point. However, from the error shown in Fig. \ref{fig7}, this sacrifice is tolerable.

Moreover, according to Remark \ref{444}, the  complexity of Algorithm 1 can be roughly characterized as $ O(Nn^{2.5}) $, while is $ O(Nn^{4}) $ for Algorithm-SQP and Algorithm-IPM on ellipsoid constraints. To further illustrate the computational cost saved by approximation,
we report the performance of the three algorithms in Table \ref{tab8}.  Algorithm 1  is based on different polyhedrons for $ \mathbf{E}_{7,6,5}(0,0,0) $, where the  number of vertices are $ s=8,12,24 $ separately. Table \ref{444} lists the average running time of solving the one-stage projection subproblem and the total number of iterations for the computational complexity of these algorithms.  It shows that   Algorithm 1 has fewer iterations and faster velocity  because obtaining  a projection  on the boundary of linear constraints  (to solve a  standard quadratic program) is faster than   doing that for general  constraint sets. 	 	
Besides,  the increase of the number of vertices (i.e., linear constraints) has no significant impact on the computational cost of  Algorithm 1.

\begin{table}
	\centering
	\small
	\renewcommand\arraystretch{1.5}
	\setlength\tabcolsep{14pt}
	\caption{Performance of different algorithms on ellipsoid constraints }
	\begin{tabular}{c|c|c|c}
		\hline
		\hline
		\multicolumn{2}{c|}{Algorithm}&Iteration&Time (sec)\\\hline
		\multirow{3}{1.6cm}{Algorithm 1} &$ s=8 $&236&0.082\\\cline{2-4}
		{}&$ s=12 $&248&0.085\\\cline{2-4}
		{}&$ s=24 $&257&0.086 \\\cline{2-4} 	\hline			
		\multicolumn{2}{c|}{Algorithm-SQP} &306&0.131\\ \hline
		\multicolumn{2}{c|}{Algorithm-IPM} &472& 0.237\\ \hline\hline	
	\end{tabular}
	\label{tab8}
\end{table}	
\begin{table*}
	\centering
	\small
	\caption{Real running time (min)  with different dimensions of decision variables.}
	\setlength\tabcolsep{18pt}
	\renewcommand\arraystretch{1.4}
	\begin{tabular}{c|c|c|c|c|c}
		\hline
		\hline
		\specialrule{0em}{0.2pt}{0.5pt}
		Dimensions & 	 $ n=4 $&  $ n=10$ & $ n=20 $& $n=50$ &$ n=100 $\\ \hline	
		Algorithm 1 &0.05 &0.11 &1.04 &2.45  &5.14 \\ \hline
		Algorithm-SQP&0.17&0.36 &3.09 & 8.31& 17.78 \\  \hline
		Algorithm-IPM&0.36 &0.64 &5.94 &10.51& 24.83 \\\hline
		\hline
	\end{tabular}
	\label{tab6}
\end{table*}
\begin{table*}
	\centering
	\normalsize
	\setlength\tabcolsep{15pt}
	\renewcommand\arraystretch{1.3}
	\caption{Real running time   of different algorithms over different types of graphs and  various network sizes.}  
	\begin{tabular}{llllll}
		\hline
		\hline
		\specialrule{0em}{1pt}{1pt}
		\multicolumn{1}{l}{\multirow{2}{0.5cm}{Players}}
		&\multicolumn{1}{l}{\multirow{2}{1.8cm}{Feasible set constraints}}
		&\multicolumn{1}{l}{\multirow{2}{1.8cm}{Graph types}}
		&  \multicolumn{3}{l}{Real running time (min)}
		\\
		\cline{4-6}
		\multicolumn{1}{c}{}
		&
		\multicolumn{1}{c}{}
		&  {} &  {Algorithm 1}&{Algorithm-SQP}&{Algorithm-IPM}\\ \hline
		\specialrule{0em}{1pt}{1pt}
		\multirow{3}{1.5cm}{$ N =4$} &\multirow{3}{2.5cm}{$ \mathbf{E}_{5,4,3}(0,0,0)$}&\multicolumn{1}{l}{\multirow{1}{1.8cm}{ER}}&\multirow{1}{1.5cm}{0.03} &\multirow{1}{1.5cm}{0.10}&\multirow{1}{1.5cm}{0.18}\\
		{}&  {} & \multicolumn{1}{l}{ring}&{0.02}&  {0.10} & {0.23}\\
		{} & {} & {complete}&{0.03}&  {0.09} & {0.14}\\
		\hline
		\specialrule{0em}{1pt}{1pt}
		\multirow{3}{1.5cm}{$ N =20$} &\multirow{3}{2.5cm}{$ \mathbf{E}_{9,8,7}(0,0,0)$}&\multicolumn{1}{l}{\multirow{1}{1.8cm}{ER }}&\multirow{1}{1.5cm}{0.17} &\multirow{1}{1.5cm}{0.51}&\multirow{1}{1.5cm}{1.06}\\
		{}&  {}  & \multicolumn{1}{l}{ring}&{0.19}&  {0.65} & {1.20}\\
		{}& {} & {complete}&{0.17}&  {0.51} & {0.92
		}\\
		\hline	
		\specialrule{0em}{1pt}{1pt}
		\multirow{3}{1.5cm}{$ N=50 $} &\multirow{3}{2.5cm}{$ \mathbf{E}_{14,13,12}(0,0,0)$}&\multicolumn{1}{l}{\multirow{1}{1.8cm}{ER}}&\multirow{1}{1.5cm}{0.86} &\multirow{1}{1.5cm}{1.47}&\multirow{1}{1.5cm}{2.35}\\
		{}&  {}  & \multicolumn{1}{l}{ring}&{1.43}&  {3.48} & {5.26}\\
		{}&  {}  & {complete}&{1.35}&  {3.45} & {6.07}\\
		\hline		
		\specialrule{0em}{1pt}{1pt}
		\multirow{3}{1.5cm}{$ N= 100$} &\multirow{3}{2.5cm}{$ \mathbf{E}_{23,22,21}(0,0,0)$}&\multicolumn{1}{l}{\multirow{1}{1.8cm}{ER}}&\multirow{1}{1.5cm}{3.26} &\multirow{1}{1.5cm}{8.20}&\multirow{1}{1.5cm}{15.58}\\
		{}&  {}  & \multicolumn{1}{l}{ring}&{3.67}&  {8.76} & {13.23}\\
		{} & {} & {complete}&{3.76}&  {14.69} & {24.30}\\
		\hline	
		\hline			
	\end{tabular}
	\label{tab3}
\end{table*}
Note that the complexity is mainly affected by the dimension of decision variables and the number of players. 
For further comparison, we consider  Algorithm 1, Algorithm-SQP, and Algorithm-IPM for $ \epsilon $-NE (NE) seeking under different network configurations. The  payoff functions and the aggregative terms {coincide with} (\ref{54}) and (\ref{52}). Table \ref{tab6} reflects the  real running time of these algorithms under different dimensions of decision variables. Take  $n=4,10,20,50,100 $.  Here  $ \Omega_{i} $ is  a high-dimensional ball  $ \mathbf{B}_{r}(q)$ in the corresponding spaces.
On the other hand, Table \ref{tab3} reflects the real running time of these algorithms under directed ring graphs, undirected complete graphs, and
Erd\H{o}s-R\'{e}nyi (ER) graphs with various network sizes, respectively.   Take  $ \mathcal{I}=4,20,50,100 $.   $ \Omega_{i} $ is a corresponding ellipsoid ball in the three-dimensional space. Numerical results in both Table \ref{tab6} and Table \ref{tab3} show that Algorithm 1 achieves a faster convergence speed than Algorithm-SQP and Algorithm-IPM. Moreover, with the expansion of the network size and the range of set constraints,  our algorithm significantly reduces the computational cost.

	\section{Conclusion}\label{s7}
A distributed   approximate algorithm has been proposed for NE seeking of aggregative games, with the players' actions  constrained by local constraint sets  and a weight-balanced network digraph. By employing  inscribed polyhedrons to approximate players' local feasible sets, the  projection operation  has been transformed  into a  standard quadratic program. The equilibrium point of the algorithm has been  proved to be the $ \epsilon $-NE  of the original game, and the exponential convergence of the algorithm has been guaranteed. Moreover,   an upper bound of the value of $ \epsilon $  has been estimated by analyzing a  perturbed system. Finally, the computational efficiency and the approximation accuracy of our algorithm have  been illustrated by  numerical examples.
\appendices
\renewcommand{\thesection}{\Alph{section}.}
\renewcommand\thefigure{\Alph{section}\arabic{figure}}

	\section{Proof of Lemma \ref{l1}}\label{a11}
With $\mathcal{D}_{s_{1}}^{1}$ and $\mathcal{D}_{s_{2}}^{2}$ defined in (\ref{s1}) and (\ref{s2}) as two profiles of
inscribed polyhedrons of $ \Omega $, 	
we assume that $ W_{0} $ consists of the vertices constructing the hyperplanes together with $ w_{0} $ in $ \mathcal{D}_{s_{2}}^{2} $.
 Denote  $ \mathcal{M}_{0} $ as a hyperplane constructed by vertices in $ W_{0} $  without loss of generality. Denote $ \eta = H(\mathcal{D}_{s_{1}}^{1},\mathcal{D}_{s_{2}}^{2})  $,  and $ v_{0} $ as the projection point of $ w_{0} $ on $ \mathcal{M}_{0} $. Since $ \mathcal{D}_{s_{2}}^{2} $ is convex and $ w_{0} $ is on the boundary of the convex set $ \Omega $,
$$
\begin{aligned}
	&\eta = dist(\mathcal{D}_{s_{1}}^{1}, w_{0})=\!\inf \limits_{v\in \mathcal{M}_{0}}\!\|w_{0}- v\| =\|w_{0}-v_{0}\|.\\
\end{aligned}
$$
Denote $ k_{0} $ as the projection point of $ w_{0} $ on the relative boundary of $ \mathcal{M}_{0} $. Then
$$
\left\|w_{0}-k_{0}\right\|=\!\inf _{k \in r b d\left(\mathcal{M}_{0}\right)}\!\left\|w_{0}-k\right\|.
$$
Because the normalized vectors $ B^{1}_{i} $(or $ B^{2}_{i} $)  represent the normal vectors of hyperplanes enclosing $ \mathcal{D}_{s_{1}}^{1} $(or $ \mathcal{D}_{s_{2}}^{2} $), as defined in (\ref{bb}), we only need to investigate the difference between $ B^{1}_{p_{1}} $ and the last $ p_{2}-p_{1}+1 $ rows of $ B^{2} $. 	

Note that the dimension of each hyperplane is $n-1$. By the definition of the gap metric in \cite{stewart1990matrix} and \cite{qiu2005unitarily}, the angle between two hyperplanes uniquely equals to that between their normal vectors. Then there exists a derived angular metric $\psi$ and a corresponding scalar $\tau_i\in[0,\pi/2)$ for $p_{1}\leq i \leq p_{2}$  such that
$$\operatorname{sin}\tau_{i}=\psi(B^{2}_{i},B^{1}_{p_{1}})=\frac{\eta}{\|w_{0}-k_{0}\|}.$$
On this basis, we investigate  the plane containing the vectors   $  \overrightarrow{v_{0}w_{0}} $  and  $  \overrightarrow{k_{0}w_{0}} $.
We can always  construct a circular arc through the point $ w_{0} $ on this plane, where it satisfies the following conditions,
\begin{enumerate}[(a)]
	\item its center falls on the vector containing the  points $  w_{0}  $ and $ v_{0} $;
	\item all its ending points are on the hyperplane $ \mathcal{M}_{0} $, and in the interior of $ \Omega $;
	\item its diameter is larger than $ \eta $.
\end{enumerate}
Particularly, when such an arc is constructed, its curvature is also determined and does not change with the decrease of $ \eta $ in the sequel, since it is only dependent on the relative location of point $ w_{0} $ on the boundary of $ \Omega_{i} $. Denote this  curvature by $ \gamma_{i} $, and one of the ending points of this arc  by $ r_{0} $. Similarly, by the gap metric in \cite{stewart1990matrix} and  \cite{qiu2005unitarily}, denote the angle between the vector $  \overrightarrow{r_{0}w_{0}} $   and the hyperplane $ \mathcal{M}_{0} $ by $ \alpha_{i}\in[0,\pi/2) $. Accordingly, with the  angular metric $\psi$,  $\operatorname{sin} \alpha_i=\psi(\overrightarrow{r_{0}w_{0}}, \mathcal M_0)$, which eventually leads to
\begin{equation*}\label{e123}
	\operatorname{cos}\alpha_{i} = \frac{\left< w_{0}-r_{0}, v_{0}-r_{0} \right>}{\|w_{0}-r_{0}\|\cdot \|v_{0}-r_{0}\|},
\end{equation*}
where $ v_{0} $, $ r_{0} $, and $ w_{0} $ are certain points in the high-dimensional space. Furthermore,	
$$
\begin{aligned}
	\operatorname{tan}\alpha_{i}
	&=\frac{\eta}{\sqrt{\left({1/{\gamma_{i}}}\right)^{2}-\left(1/{\gamma_{i}}-\eta\right)^{2}}}=\frac{1}{\sqrt{\frac{2}{\eta\gamma_{i}}-1}}.
\end{aligned}
$$
Since $  \overrightarrow{v_{0}r_{0}} $ and $  \overrightarrow{v_{0}k_{0}} $ are collinear vectors, with
$ \|v_{0}-r_{0}\|\leq  \| v_{0}-k_{0}\|  $, 
there should be  $ \tau_{i}\leq \alpha_{i} $. Recalling  $\alpha_{i} \leq \operatorname{tan}\alpha_{i}$,  we have
\begin{equation}\label{aad}
	\tau_{i}\leq\alpha_{i}\leq \frac{1}{\sqrt{\frac{2}{\eta\gamma_{i}}-1}}.
\end{equation}		
Additionally, for $p_{1}\leq i \leq p_{2}$, there is $ P_{i}\in SO(n) $ such that $ B^{2}_{i}= B^{1}_{p_{1}}P_{i} $. From \cite[Theorem 2.21]{Hall2003Lie}, 
$$ P_{i}= I+\tau_{i}\mathcal{V}_{i}+o(\tau_{i}), \quad \mathcal{V}_{i}\in \mathfrak{g}(SO(n)),$$
where $ \mathcal{V}_{i} $ is a constant matrix  and $ \mathfrak{g}(\cdot) $ represents its Lie algebra.	
Since $ B^{1} $ and $ B^{2} $ are normalized rows,
$$\begin{aligned}
	\left\|B_{i}^{2}-B_{p_{1}}^{1}\right\| &=\left\|(P_{i}-I)B^{1}_{p_{1}}\right\|
	&=\left\|\tau_{i}\mathcal{V}_{i}+o(\tau_{i})\right\|.\\
\end{aligned}$$	
Note  that
$
\lim \limits_{ \eta\rightarrow 0}
\frac{1}{\sqrt{\frac{2}{\eta\gamma_{i}}-1}}
=0$.
Together with (\ref{aad}), we have
$$\begin{aligned}
	\left\|B_{i}^{2}-B_{p_{1}}^{1}\right\| =\left\|\tau_{i}\mathcal{V}_{i}+o(\tau_{i})\right\| \rightarrow 0, \;\; \quad\mathrm{as}\;\eta\rightarrow 0.
\end{aligned}$$
Thus, the conclusion follows.
\hfill $\square$


\section{Proof of Lemma \ref{t3}}         \label{a13}

Note that $ e(\boldsymbol{z}) $ is related to  the difference caused by projection  on the inscribed polyhedron  $ \boldsymbol{\mathcal{D}}_{s} $  and the original action set $ \boldsymbol{\Omega} $. For $ i\in\mathcal{I} $, denote $ x^{1} $ and  $ x^{2} $
as two projection points on $ \Omega_{i} $ and $ \mathcal{D}_{s_{i}}^{i} $, respectively, i.e.,
$$ \begin{aligned}
	&x^{1}= \Pi_{\Omega_{i}}(x_{i}-\beta_{1} U_{i}(x_{i}, \zeta_{i})),\\
	&x^{2}= \Pi_{\mathcal{D}_{s_{i}}^{i}}(x_{i}-\beta_{1} U_{i}(x_{i}, \zeta_{i})).
\end{aligned}$$
Then $$ \|x^{1}-x^{2}\|\!\!=\!\! \|\Pi_{\Omega_{i}}\!(x_{i}-\beta_{1} U_{i}(x_{i}, \zeta_{i})\!)-\Pi_{\mathcal{D}_{s_{i}}^{i}}\!(x_{i}-\beta_{1} U_{i}(x_{i}, \zeta_{i})\!)\|. $$

Recalling the definition of the inscribed polyhedron, $x^2$ is the point projected onto a hyperplane basically, where
we can construct a vector perpendicular to this  hyperplane and passing through  $x^2$. Denote the intersection point between this vector and   the boundary of $ \Omega_{i} $ by  $ x^{3} $. Eventually, $ x^{1} $, $ x^{2} $, and $ x^{3} $ form a triangle. Therefore,
$$
\|x^{1}-x^{2}\|<\|x^{1}-x^{3}\|+\|x^{2}-x^{3}\|.
$$
Recall the definition of $ \boldsymbol{H} $, for the $i$th player,
\begin{equation*}
	\begin{aligned}
		h_{i}&=\| x_{\diamond}-y_{\diamond}\| \\
		&=\max \left\{\sup \limits_{x \in \mathcal{D}_{s_{i}}^{i}} \operatorname{dist}(x, \Omega_{i}), \sup \limits_{y \in \Omega_{i}} \operatorname{dist}(y, \mathcal{D}_{s_{i}}^{i})\right\}.
	\end{aligned}
\end{equation*}
Accordingly,   we investigate  a plane containing the vector   $  \overrightarrow{x_{\diamond}y_{\diamond}} $. We try to find a curvature related with $ \Omega_{i} $  and then
construct a   piece of a  circular arc with this  curvature  on this plane. Similar to Lemma \ref{l1}, the constructed arc needs to satisfy
\begin{enumerate}[(a)] 	
	\item it  passes through the  point  $ y_{\diamond} $  and its center  falls on the vector
	containing the  points of $  x_{\diamond}$ and  $y_{\diamond} $;
	\item its ending points
	are on the hyperplane  perpendicular to the vector $  \overrightarrow{x_{\diamond}y_{\diamond} } $;
	\item its ending points are outside $ \Omega_{i} $.
\end{enumerate}
Since the  found curvature is related with $ \Omega_{i} $ rather than any approximation information,  its value  can be regarded as a constant. It is obvious that  there always exists such an arc.   Denote this curvature by $ \nu_{i} $, i.e., the circular radius by $ 1/\nu_{i} $, the center point of the arc by $ c_{0} $, the angle corresponding to the arc by  $ \theta_{i} $,  an ending point of the arc by $ d_{0} $, and
the length of the   arc  by $l_{i}$. Clearly,   $ l_{i}= \theta_{i}/{\nu_{i}}$. Then, recalling the gap metric and the derived angular metric in \cite{stewart1990matrix} and  \cite{qiu2005unitarily}, we  have
\begin{equation*}\label{e18}	
	\begin{aligned}
		&\theta_{i}=2\operatorname{arccos}\frac{\|c_{0}-x_{\diamond} \| }{\| d_{0}-c_{0}\|}=2\operatorname{arccos}(1-\nu_{i}h_{i}).
	\end{aligned}
\end{equation*}
Moreover,  $\|x^{2}-x^{3}\|$ can be bounded by the Hausdorff distance $ h_{i} $ and   $ \|x^{1}-x^{3}\|  $ can be bounded by the length of arc  $ l_{i} $ intuitively. 
Consequently, the bound of the difference between the projection dynamics on $ \mathcal{D}_{s_{i}}^{i} $ and $ \Omega_{i} $ of the $ i $th player can be expressed by
\begin{equation*}\label{e14}	
	\begin{aligned}
		&\|\Pi_{\Omega_{i}}(x_{i}-\beta_{1} U_{i}(x_{i}, \zeta_{i}))-\Pi_{\mathcal{D}_{s_{i}}^{i}}(x_{i}-\beta_{1} U_{i}(x_{i}, \zeta_{i}))\|\\
		&\leq l_{i}+h_{i}\\
		&=\frac{2}{ \nu_{i}}\operatorname{arccos}(1-\nu_{i}h_{i})+h_{i}.
	\end{aligned}
\end{equation*}
The analysis of other players is similar to that of player $ i $.
To sum up,  $ \|e(\boldsymbol{z})\| $ can be bounded by $\delta(\boldsymbol{H})$, that is,
\begin{equation*}\label{e19}	
	\begin{aligned}
		\|e(\boldsymbol{z})\|&\leq(1+\|\nabla \boldsymbol{q}(\boldsymbol{x})-\mathbf{1}_{N} \otimes \nabla \mathcal{Q}(\boldsymbol{x})\|) \cdot\\	
		&\quad\,\|\Pi_{\boldsymbol{\mathcal{D}}_{s}}(\boldsymbol{x}-\beta_{1} U(\boldsymbol{x}, \boldsymbol{\zeta}))-\Pi_{\boldsymbol{\Omega}}(\boldsymbol{x}-\beta_{1} U(\boldsymbol{x}, \boldsymbol{\zeta}))\| \\
		&=(1+c_{3})\sqrt{ \sum_{i=1}^{N}\left(\frac{2}{ \nu_{i}}\operatorname{arccos}(1-\nu_{i}h_{i})+h_{i}\right)^{2}}\\
		&\triangleq\delta(\boldsymbol{H}),
	\end{aligned}
\end{equation*}
where $ c_{3} $ is a  Lipschitz constant of $ q_{i} $ for $ i\in \mathcal{I} $. 	This yields the conclusion.
\hfill $\square$
\section{Proof of Lemma \ref{l7}}  \label{a5}
Take $  V_{\boldsymbol{\Omega}}(\boldsymbol{z}) $  as a Lyapunov  function of (\ref{e11}) that satisfies (\ref{e22}). Then the derivative of 	$ V_{\boldsymbol{\Omega}}(\boldsymbol{z}) $ along the trajectories of (\ref{e12}) satisfies
$$
\begin{aligned}
	\dot{V}_{\boldsymbol{\Omega}}(\boldsymbol{z}) & \leq-a_{3}\|\boldsymbol{z}-\boldsymbol{z}^{*}\|^{2}+\left\|\frac{\partial V_{\boldsymbol{\Omega}}}{\partial \boldsymbol{z}}\right\|\|e( \boldsymbol{z})\| \\
	& \leq-a_{3}\|\boldsymbol{z}-\boldsymbol{z}^{*}\|^{2}+a_{4} \delta\|\boldsymbol{z}-\boldsymbol{z}^{*}\|, \\
\end{aligned}
$$	
where $ \delta=\delta(\boldsymbol{H}) $, $ \boldsymbol{z}^{*}=\boldsymbol{z}^{*}(\boldsymbol{\Omega}) $.
For a positive constant $ \mu<1 $ and $ \|\boldsymbol{z}-\boldsymbol{z}^{*}\| \geq \delta a_{4} / \mu a_{3} $, it satisfies
$$			
\begin{aligned}
	\dot{V}_{\boldsymbol{\Omega}}(\boldsymbol{z})&\leq-(1-\mu) a_{3}\|\boldsymbol{z}-\boldsymbol{z}^{*}\|^{2}-\mu a_{3}\|\boldsymbol{z}-\boldsymbol{z}^{*}\|^{2}\\
	&+a_{4} \delta\|\boldsymbol{z}-\boldsymbol{z}^{*}\| \quad  \\
	& \leq-(1-\mu) a_{3}\|\boldsymbol{z}-\boldsymbol{z}^{*}\|^{2}.
\end{aligned}
$$ 		
Denote $ K=\delta a_{4} / \mu a_{3} $. 	
Once $ V_{\boldsymbol{\Omega}}\geq a_{2} K^{2} $, $ \|\boldsymbol{z}-\boldsymbol{z}^{*}\|\!\geq\! K $ and $ \dot{V}_{\boldsymbol{\Omega}} \leq -(1-\mu)a_{3}/a_{2}V_{\boldsymbol{\Omega}} $, which implies
$$
V_{\boldsymbol{\Omega}}(\boldsymbol{z})\leq e^{-(1-\mu)a_{3}/a_{2} (t-t_{0})}V_{\boldsymbol{\Omega}}(\boldsymbol{z}(t_{0})).
$$	
Hence,
$$
\begin{aligned}
	\|\boldsymbol{z}(t)-\boldsymbol{z}^{*}\| & \leq\left(\frac{V_{\boldsymbol{\Omega}}(\boldsymbol{z})}{a_{1}}\right)^{1 / 2}\\ &\leq\left(\frac{1}{a_{1}} e^{-(1-\mu)a_{3}/a_{2}(t-t_{0})} V_{\boldsymbol{\Omega}}\left( \boldsymbol{z}(t_{0})\right)\right)^{1 / 2} \\
	&=\sqrt{\frac{a_{2}}{a_{1}}} e^{-\omega(t-t_{0})}\left\|\boldsymbol{z}(t_{0})\right\|,
\end{aligned}
$$	
which holds over the interval $ [t_{0},t_{0}+T) $ when $ V_{\boldsymbol{\Omega}}\geq a_{2}K^{2} $. For $ t\geq t_{0}+T $, we have
$$
\|\boldsymbol{z}(t)-\boldsymbol{z}^{*}\| \leq\sqrt{\frac{V_{\boldsymbol{\Omega}}(\boldsymbol{z})}{a_{1}}}\leq\sqrt{\frac{a_{2}}{a_{1}}}K=R.
$$
This yields the conclusion.
\hfill $\square$

	\bibliographystyle{IEEEtran}
\bibliography{autosam}

%








\end{document}